\theoremstyle{plain}
\newtheorem{theorem}{Theorem}[section]
\newtheorem{corollary}[theorem]{Corollary}
\newtheorem{proposition}[theorem]{Proposition}
\newtheorem{lemma}[theorem]{Lemma}
\theoremstyle{definition}
\newtheorem{definition}[theorem]{Definition}
\newtheorem{example}[theorem]{Example}
\theoremstyle{remark}
\newtheorem{remark}[theorem]{Remark}
\numberwithin{equation}{section}\theoremstyle{plain}
\newcommand{\C}{{\mathcal C}}
\newcommand{\B}{{\mathcal B}}
\newcommand{\D}{{\mathcal D}}
\newcommand{\Z}{{\mathcal Z}}
\newcommand\SL{\operatorname{SL}}
\newcommand\rep{\operatorname{rep}}
\newcommand\cd{\operatorname{cd}}
\newcommand\Irr{\operatorname{Irr}}
\newcommand\FPdim{\operatorname{FPdim}}
\newcommand\vect{\operatorname{Vec}}
\newcommand\id{\operatorname{id}}
\newcommand\ad{\operatorname{ad}}
\newcommand\ord{\operatorname{ord}}
\newcommand\End{\operatorname{End}}
\newcommand\rk{\operatorname{rk}}
\newcommand\Hom{\operatorname{Hom}}
\newcommand\Rker{\operatorname{Rker}}
\newcommand\RZ{\operatorname{RZ}}
\newcommand\Aut{\operatorname{Aut}}
\newcommand{\1}{\textbf{1}}
\begin{document}
\title[Faithful simple objects, orders and gradings of fusion categories]{Faithful simple objects,
orders and gradings of fusion categories}
\author{Sonia Natale}
\address{Sonia Natale: Facultad de Matem\'atica, Astronom\'\i a y F\'\i sica.
Universidad Nacional de C\'ordoba. CIEM -- CONICET. Ciudad
Universitaria. (5000) C\'ordoba, Argentina}
\email{natale@famaf.unc.edu.ar
\newline \indent \emph{URL:}\/ http://www.famaf.unc.edu.ar/$\sim$natale}

\thanks{Partially supported by CONICET and SeCYT--UNC}

\subjclass{18D10; 16T05}

\date{October 7, 2011.}

\begin{abstract} We establish some relations between the
orders of simple objects in a fusion category and the structure of
its universal grading group. We consider fusion categories which
have a faithful simple object and show that its universal  grading
group must be cyclic. As for the converse, we prove that a braided
nilpotent fusion category with cyclic universal grading group
always has a faithful simple object. We study the universal
grading of fusion categories with generalized Tambara-Yamagami
fusion rules. As an application, we classify modular categories in
this class and describe the
modularizations of braided Tambara-Yamagami fusion categories.
\end{abstract}

\maketitle

\section{Introduction}

Group gradings on fusion categories and more precisely, group
extensions of a fusion category, are key ingredients in several
classification results. In particular, they underlie the notions
of nilpotency and solvability of a fusion category, developed in
\cite{gel-nik, ENO2}. Group extensions of fusion categories have
been recently classified in \cite{ENO-homotopy}.

\medbreak We shall work over an  algebraically closed base field $k$ of
characteristic zero.  Let $\C$ be a fusion category over $k$.
There is a canonical faithful grading $\C = \oplus_{g \in
U(\C)}\C_g$, called the \emph{universal grading} of $\C$, with trivial homogeneous component $\C_e = \C_{\ad}$, where $\C_{\ad}$ is the \emph{adjoint subcategory} of $\C$, that is, the fusion
subcategory generated by $X \otimes X^*$, where $X$ runs
through the simple objects of $\C$ \cite{ENO}. The group $U(\C)$ is called the
\emph{universal grading group} of $\C$ \cite{gel-nik}.

\medbreak Let $X$ be an object of $\C$. Then $X$ is called
\emph{faithful} if the fusion subcategory $\C[X]$ generated by $X$
is all of $\C$. So that $X$ is faithful if and only if every
simple object of $\C$ appears with positive multiplicity in some
tensor power of $X$.

\medbreak Let $G$ be a finite group. An example of a fusion
category over $k$ is given by the category $\rep G$  of finite
dimensional $k$-linear representations of $G$. In this case, the
universal grading group is isomorphic to the center $Z(G)$ of $G$
and the adjoint subcategory coincides with the category $\rep
G/Z(G)$. See \cite{gel-nik}.

Suppose $V$ is a finite dimensional representation of $G$, and let
$\chi$ be the character of $V$. Then it follows from the
Burnside-Brauer Theorem (see \cite[Theorem 4.3]{isaacs}) that $V$
is a faithful object of $\rep G$ if and only if $\chi$ is a
faithful character of $G$, that is, if and only if $\ker \chi =
1$.

\medbreak Let $X$ be a nonzero object of a fusion category $\C$.
The \emph{order} of $X$ is the smallest natural number $n$ such
that the $n$th tensor power $X^{\otimes n}$ contains the trivial
object $\1$ of $\C$. The order of every nonzero object in $\C$ is
finite; indeed, it is not bigger than the rank of $\C$. This
invariant is introduced and studied in \cite{KSZ} for the category
of representations of a semisimple Hopf algebra.

The orders of simple objects play a r\^ ole in recent
classification results \cite{RSW, hong-rowell}. As pointed out in
\cite{hong-rowell}, the classification of modular categories of a
given rank divides naturally into those for which every simple
object is self-dual, that is, of order $\leq 2$, and those for
which at least one simple object is not self-dual, that is, of
order bigger than $2$. See, in particular, Theorem 2.2 of
\textit{loc. cit.}

\medbreak In this paper we establish some relations between the
structure of the group $U(\C)$ and the orders of the simple
objects of $\C$. We show that if $\C$ is generated by simple
objects $X_1, \dots, X_m$, then the group $U(\C)$ is generated by
elements $g_1, \dots, g_m$, such that $X_i \in \C_{g_i}$, and the
order of $g_i$ divides the order of $X_i$, for all $i = 1, \dots,
m$. Hence, if $\C$ has a faithful simple object $X$, then the
universal grading group of $\C$ is cyclic and its order divides
the order of $X$.

We also establish the converse in the case where $\C$ is braided
and nilpotent. That is, if such a fusion category has a cyclic
universal grading group, then it has a faithful simple object.
These results are contained in Section \ref{s-faithful}; see
Theorems \ref{G-cyclic} and \ref{braided-primepower}. They extend
classical results for finite groups. The proof of Theorem
\ref{braided-primepower} relies on the one hand on Theorem
\ref{braided-nilp} which generalizes, in the braided case, the
fact that a nontrivial normal subgroup of a finite nilpotent group
intersects the center nontrivially, and on the other hand, on the
fact that every braided nilpotent fusion category is equivalent to
a tensor product of braided fusion categories whose
Frobenius-Perron dimensions are powers of distinct primes
\cite[Theorem 1.1]{DGNO}.

\medbreak Let us summarize some consequences of these facts. See
Corollaries \ref{sd-faithful}, \ref{exp-udec}, \ref{gdec} and
Proposition \ref{gen-sd}. Firstly, for any fusion category $\C$,
the order of $U(\C)$ divides the least common multiple of the
orders of simple objects of $\C$. Thus, if every simple object is
self-dual, then $U(\C)$ is an elementary abelian $2$-group.

Also, if $\C$ is nilpotent and has
a simple object of order $p$, where $p$ is a
prime number, then the Frobenius-Perron dimension of
$\C$, which is always an integer, is divisible by $p$. Moreover, if the order of every simple object of $\C$ is
a power of $p$, then the Frobenius-Perron dimension of $\C$ is a
power of $p$.

Assume in addition that $\C$ is a braided. If $X_1, \dots, X_n$
are simple objects that generate $\C$ as fusion category,  we get
that the exponent of $U(\C)$ divides the least common multiple of
the orders of $X_1, \dots, X_n$. In particular, $U(\C)$ is an
elementary abelian $2$-group if $\C$ is generated by self-dual
simple objects.

These results can be formulated in terms of the group of
invertible objects of $\C$ when $\C$ is a modular category, since
in this case there is an isomorphism between this group and
$U(\C)$ \cite{gel-nik}.

\medbreak In the last two sections of the paper we discuss some
applications of the relations between the orders of simple objects
and the orders of the elements of the universal grading group to
study a class of fusion categories. We consider fusion categories
$\C$ with \emph{generalized Tambara-Yamagami fusion rules} in the
sense that $\C$ is not pointed and the tensor product of two
simple objects of $\C$ is a sum of invertible objects. Fusion
categories with these kind of fusion rules are classified, up to
equivalence of tensor categories, in \cite{liptrap}.

In Section \ref{generalized-ty} we discuss the universal grading
of this kind of fusion categories.  We then apply the results of
this section to establish a classification result for  modular
categories in this class. Specifically, we show that if $\C$ is a
modular category, then $\C$ has generalized Tambara-Yamagami fusion
rules if and only if $\C$ is equivalent to $\mathcal I \boxtimes \mathcal B$,
where $\mathcal I$ is an Ising category and $\mathcal B$ is a
pointed modular category. See Theorem \ref{modular-genty}. This
implies the classification of such modular categories in terms of
group theoretical data, in view of the results on Ising and
pointed modular categories \cite[Subsection 2.11 and Appendix
B]{DGNOI}, \cite{quinn}.

\medbreak Let $\C$ be a Tambara-Yamagami fusion category. Up to
isomorphism, $\C$ has exactly one non-invertible object $X$, and a
(necessarily abelian) group of invertible objects $G$ such that $X
\otimes X \simeq \oplus_{g \in G}g$. The classification of these
categories, up to tensor equivalence, is given in \cite{TY}. The
possible structures of a braided and spherical fusion category in
$\C$ are classified in \cite{Siehler-braided}. By \cite[Theorem
1.2]{Siehler-braided}, $\C$  admits a braiding if and only if $G$
is an elementary abelian $2$-group. In particular, $\FPdim \C =
2^{m+1}$, where $m$ is the rank of $G$. Moreover, by \cite[Theorem
1.2 (3)]{Siehler-braided} each braiding of $\C$ has two choices of
ribbon structures compatible with it.

Let $\C$ be a braided Tambara-Yamagami fusion category and let us
regard $\C$ as a premodular category with respect to a fixed
ribbon structure $\theta \in \Aut(\id_\C)$. See \cite{bruguieres},
\cite{mueger}.
We show that $\C$ is modularizable and describe its modularization
$\tilde \C$. We prove that $\tilde \C$ is pointed if and only if
$\C$ is integral. Otherwise, $\tilde \C \simeq \mathcal I$, where
$\mathcal I$ is an Ising category. This is contained in Section
\ref{ty-mod}. Theorem \ref{modulariz-ty} is the main result of
this section. Its proof relies on the fact that, unless it is
symmetric, $\C$ is a $T$-equivariantization of $\tilde \C$ for a
certain subgroup $T$ of index $\leq 2$ of the group $G$ of
invertible objects of $\C$.

\medbreak Recall from \cite{Siehler} that if $G$ is a finite group
and $\kappa$ is a nonnegative integer, a \emph{near-group fusion
category} of type $(G, \kappa)$ is a fusion category $\C$ whose
isomorphism classes of simple objects are represented by $G$ and a
non-invertible object $X$, obeying
\begin{equation}\label{near-group}g \otimes h \simeq gh, \quad
X \otimes X \simeq \oplus_{g \in \Gamma} g \oplus \kappa X, \quad
\forall g, h \in \Gamma.\end{equation} Near-group categories of
type $(G, 0)$ are thus Tambara-Yamagami categories.

For the type $(G, \kappa)$ with $\kappa > 0$, all possible
structures of braided fusion category in $\C$ are classified in
\cite{thornton}. In this case, if $\C$ is not symmetric, then
either $G = 1$ or $G$ is isomorphic to $\mathbb Z_2$ or $\mathbb
Z_3$. It follows also from the results of \cite{thornton} that any
braided near group category is a premodular category and, as such,
it is modularizable. Furthermore, the modularization of $\C$ is a
pointed fusion category, unless $\C$ is of type $(1, 1)$, also
called a \emph{Yang-Lee} category, in which case $\C$ is modular.

It turns out that the modularization of any braided near-group
category is always either pointed or self-dual of rank at most
$3$.

\subsection*{Conventions and notation} Let $\C$ be a fusion category over
$k$. That is, $\C$ is a $k$-linear semisimple rigid tensor category $\C$
with finitely many isomorphism classes of simple objects,
finite-dimensional spaces of morphisms, and such that the unit
object $\textbf{1}$ of $\C$ is simple.
The set of isomorphism classes of simple objects in $\C$
will be denoted by $\Irr (\C)$. By abuse of notation, we shall
indicate a simple object and its isomorphism class by the same letter.

If $X \in \Irr \C$ and $Y$ is an object of $\C$, the multiplicity
 of $X$ in $Y$ will be denoted by $m(X, Y)$. Thus $m(X, Y) = \dim \Hom_\C(X, Y)$ and
 $Y \simeq \oplus_{X \in \Irr(\C)} m(X, Y) X$.

The group of isomorphism classes of invertible objects of $\C$
will be denoted by $G(\C)$.
The group $G(\C)$ acts on the set $\Irr (\C)$ by left
(and right) tensor multiplication. Denote by $G[X]$ the stabilizer
of  $X \in \Irr (\C)$ under the left action of $G(\C)$.
If $g \in G(\C)$ and $X \in \Irr (\C)$, we have $g \in G[X]$ if and
only if $m(g, X \otimes X^*) > 0$ if and only if $m(g, X \otimes
X^*) = 1$.

A \emph{fusion subcategory} of $\C$ is a full tensor subcategory
stable under direct sums and subobjects. A fusion subcategory is
itself a fusion category \cite[Corollary F.7]{DGNOI}. Let $S$ be a
set of objects of $\C$. The smallest abelian subcategory of $\C$
containing $S$ and stable by direct sums and subobjects will be
denoted $(S)$. We shall indicate by $\C[S]$ the fusion subcategory
of $\C$ generated by $S$. The fusion subcategory generated by $\{
X\}$, where $X$ is an object of $\C$ will be denoted $\C[X]$. The
maximal pointed fusion subcategory of $\C$ is denoted $\C_{pt} =
\C[G(\C)]$.

\section{Group gradings on fusion categories}

Let $\C$ be a fusion category and let $G$ be a finite group. A \emph{$G$-grading} on $\C$
is a decomposition of $\C$ as a direct sum of full abelian
subcategories $\C = \oplus_{g \in G} \C_g$, such that $\C_g^* =
\C_{g^{-1}}$ and  the tensor product $\otimes: \C \times \C \to
\C$ maps $\C_g \times \C_h$ to $\C_{gh}$.  The neutral component
$\C_e$ is thus a fusion subcategory of $\C$.

A $G$-grading on $\C$ is equivalently determined by a function $\lambda: \Irr
(\C) \to G$ such that $\lambda(X^*) = \lambda(X)^{-1}$ and
$\lambda(Z) = \lambda(X)\lambda(Y)$, for all $X, Y, Z \in \Irr
(\C)$ such that $m(Z, X \otimes Y) > 0$.

\medbreak The grading $\C = \oplus_{g \in G} \C_g$ is called \emph{faithful} if $\C_g \neq 0$, for all $g
\in G$. In other words, the associated map $\lambda: \Irr(\C) \to G$ is surjective. In this case, $\C$ is called a \emph{$G$-extension} of
$\C_e$ \cite{ENO2}.
When $\C$ is a $G$-extension of a fusion subcategory $\D = \C_e$,
the Frobenius-Perron dimensions of $\C_g$ are all equal and we
have $\FPdim \C = |G| \FPdim \D$ \cite[Proposition 8.20]{ENO}.

\medbreak Let us recall the notion of nilpotent fusion category
from \cite{gel-nik}. The \emph{adjoint subcategory} $\C_{\ad}$ is
the full tensor subcategory of $\C$ generated by $X\otimes X^*$,
$X \in \Irr \C$. The \emph{upper central series} $\C = \C^{(0)}
\supseteq \C^{(1)}\supseteq \ldots \supseteq \C^{(n)} \supseteq
\ldots$ of $\C$ is defined inductively as $\C^{(n)} =
(\C^{(n-1)})_{\ad}$, for all $n \geq 1$. A fusion category $\C$ is
called \emph{nilpotent} if its upper central series converges to
$\vect_k$, that is, if $\C^{(n)} = \vect_k$ for some $n \geq 0$.
The smallest such $n$ is called the \emph{nilpotency class} of
$\C$.

For instance, if $p$ is a prime number and $\C$ is any fusion
category with $\FPdim \C = p^n$, $n \geq 0$, then $\C$ is
nilpotent \cite{gel-nik, ENO}.

\medbreak  There is a faithful grading $\C = \oplus_{g \in
U(\C)}\C_g$, where $\C_e = \C_{\ad}$ and $U(\C)$ is the universal
grading group of $\C$.  Let $\lambda_\C: \Irr(\C) \to U(\C)$
denote the universal grading of $\C$. Its universal property can
be stated as follows: for any grading $\lambda: \Irr(\C) \to G$ by
a group $G$, there exists a unique group homomorphism $\phi: U(\C)
\to G$ such that $\lambda = \phi\lambda_\C$.

\medbreak Suppose $\D$ is a fusion subcategory of $\C$.  Then $\D$ is faithfully graded by the subgroup
$U_\D(\C) = \{ g \in U(\C)|\,  \D\cap \C_g \neq 0 \} \subseteq
U(\C)$. By the universal property of $U(\D)$ there is a
surjective group homomorphism $\phi_\D:U(\D) \to U_\D(\C)$
\cite[Corollary 3.7]{gel-nik}. For all $g \in U_\D(\C)$, we have a
decomposition $\D \cap \C_g = \oplus_{\phi_\D(t) = g}\D_t$.

\medbreak Let $G$ be a finite group. The category $\C = \rep G$
of finite dimensional representations of $G$ is nilpotent if and
only if $G$ is nilpotent \cite{gel-nik}. On the other hand, a
fusion subcategory of $\C$ is of the form $\rep G/N$ for some
normal subgroup $N$ of $G$. The next theorem amounts in this case
to the well-known fact that  if $G$ is nilpotent and $N \neq 1$ is
a subgroup, then $N \cap Z(G) \neq 1$.

\begin{theorem}\label{braided-nilp} Let $\C$ be a nilpotent fusion category with commutative Grothendieck ring.
Suppose $\D$ is a fusion subcategory of $\C$ such that $U_\D(\C) =
U(\C)$, that is, $\D \cap \C_g \neq 0$, for all $g \in U(\C)$.
Then $\D = \C$.
\end{theorem}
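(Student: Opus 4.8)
The plan is to argue by induction on the nilpotency class $n$ of $\C$. If $n=0$ then $\C=\vect_k$ and there is nothing to prove, so assume $n\geq 1$; then $\C_{\ad}=\C^{(1)}$ is a \emph{proper} fusion subcategory of $\C$ (properness is exactly where nilpotency enters), it again has commutative Grothendieck ring as a subring of that of $\C$, and its nilpotency class is $n-1$. The reduction I would aim for is $\C_{\ad}\subseteq\D$: indeed, writing $\D\vee\C_{\ad}$ for the fusion subcategory generated by $\D$ and $\C_{\ad}$, the hypothesis $U_\D(\C)=U(\C)$ is equivalent to $\D\vee\C_{\ad}=\C$, and once $\C_{\ad}\subseteq\D$ this forces $\D=\D\vee\C_{\ad}=\C$. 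The equivalence itself is a degree count: since $\C_{\ad}=\C_e$ sits in the trivial component, any simple constituent of a product $D\otimes A$ with $D\in\D$, $A\in\C_{\ad}$ has universal degree $\lambda_\C(D)\in U_\D(\C)$; using commutativity of the Grothendieck ring to gather the $\D$- and $\C_{\ad}$-factors of an arbitrary product, every simple object of $\D\vee\C_{\ad}$ has degree in $U_\D(\C)$, while conversely $\D\vee\C_{\ad}\supseteq\C_e$ already meets $\C_g$ for every $g\in U_\D(\C)$. Hence $\D\vee\C_{\ad}$ is faithfully $U_\D(\C)$-graded with trivial component $\C_e$, so $\FPdim(\D\vee\C_{\ad})=|U_\D(\C)|\,\FPdim\C_e$ by the extension formula $\FPdim\C=|G|\FPdim\C_e$; this equals $\FPdim\C=|U(\C)|\FPdim\C_e$ precisely when $U_\D(\C)=U(\C)$, and a fusion subcategory of full Frobenius--Perron dimension is the whole category.

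Granting the equivalence, the crux is the \emph{descent} step: to invoke the inductive hypothesis on $\C_{\ad}$ I must check that $\D':=\D\cap\C_{\ad}$ satisfies $U_{\D'}(\C_{\ad})=U(\C_{\ad})$, i.e. that $\C_{\ad}$ is generated by $\D'$ and $(\C_{\ad})_{\ad}=\C^{(2)}$. Here is where commutativity does the real work, replacing the commutator/normality argument from the group case. From $\D\vee\C_{\ad}=\C$, every simple $X\in\Irr\C$ is a constituent of some $D\otimes A$ with $D\in\Irr\D$, $A\in\Irr\C_{\ad}$; then $X\otimes X^{*}$ is a constituent of $D\otimes A\otimes A^{*}\otimes D^{*}$, whose class in the commutative Grothendieck ring equals $[D\otimes D^{*}]\,[A\otimes A^{*}]$. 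Now $D\otimes D^{*}\in\D\cap\C_{\ad}=\D'$ and $A\otimes A^{*}\in(\C_{\ad})_{\ad}=\C^{(2)}$, so $X\otimes X^{*}$ lies in $\D'\vee\C^{(2)}$. Since the $X\otimes X^{*}$ generate $\C_{\ad}$, this gives $\C_{\ad}=\D'\vee\C^{(2)}$, which by the equivalence (now applied to the pair $\C_{\ad}\supseteq\D'$) is exactly $U_{\D'}(\C_{\ad})=U(\C_{\ad})$. Applying the inductive hypothesis to $\C_{\ad}$ yields $\D'=\C_{\ad}$, that is $\C_{\ad}\subseteq\D$, and the reduction above finishes the argument.

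The step I expect to be the main obstacle, and the place to be careful, is the descent $U_\D(\C)=U(\C)\Rightarrow U_{\D\cap\C_{\ad}}(\C_{\ad})=U(\C_{\ad})$, since naively the universal gradings of $\C$ and of $\C_{\ad}$ are independent and there is no reason a fusion subcategory should behave ``normally''. The resolution is that the commutativity hypothesis lets one rearrange the factors of $D\otimes A\otimes A^{*}\otimes D^{*}$ into $(D\otimes D^{*})\otimes(A\otimes A^{*})$, cleanly separating the $\D$-contribution from the adjoint contribution; this is precisely the categorical surrogate for the identity $[x,g]\in N$ used in the nilpotent-group statement. I would double-check the two standard facts invoked (equal Frobenius--Perron dimensions of the homogeneous components of a faithful grading, and that a fusion subcategory of full dimension is everything), confirm that $\C_{\ad}$ is nilpotent of class $n-1$ with commutative Grothendieck ring so that the inductive hypothesis genuinely applies, and record the base case $n=0$ where $U(\C)$ is trivial and $\D=\C=\vect_k$ automatically.
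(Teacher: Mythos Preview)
Your proof is correct. Both your argument and the paper's hinge on the same opening observation---that once $\C_{\ad}\subseteq\D$ the hypothesis $\D\cap\C_g\neq 0$ for all $g$ forces $\D=\C$---but the routes to that inclusion differ. The paper builds \emph{upward} from $\D$ using the commutator construction $\D^{co}$ of \cite{gel-nik} (the fusion subcategory generated by all simple $Y$ with $Y\otimes Y^*\in\D$), iterating it to a chain $\D=\D^{[0]}\subseteq\D^{[1]}\subseteq\cdots$ with $\C^{(m-j)}\subseteq\D^{[j]}$; once $\C_{\ad}\subseteq\D^{[m-1]}$ one gets $\D^{[m-1]}=\C$, and a short downward induction using $(\D^{[k+1]})_{\ad}\subseteq\D^{[k]}$ collapses the chain back to $\D^{[0]}=\C$. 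You instead descend directly \emph{into} $\C_{\ad}$: the rearrangement $[D\otimes A\otimes A^*\otimes D^*]=[D\otimes D^*][A\otimes A^*]$ shows that the hypothesis $\D\vee\C_{\ad}=\C$ passes to $(\D\cap\C_{\ad})\vee\C^{(2)}=\C_{\ad}$, enabling a clean induction on the nilpotency class. Your approach is more explicit about the single point where commutativity is needed and avoids the auxiliary $\D^{co}$ machinery; the paper's approach has the advantage of isolating the commutator subcategory as a reusable tool and making contact with the ``lower central series'' picture.

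One minor remark: the equivalence $U_\D(\C)=U(\C)\Leftrightarrow\D\vee\C_{\ad}=\C$ does not actually require commutativity or the Frobenius--Perron count. One checks directly that $\D\vee\C_{\ad}=\bigoplus_{g\in U_\D(\C)}\C_g$: for any simple $Y\in\C_g$ with $g\in U_\D(\C)$, pick $D\in\D\cap\C_g$ and observe that $Y$ is a summand of $D\otimes(D^*\otimes Y)$ with $D^*\otimes Y\in\C_e=\C_{\ad}$. So commutativity is used only in the descent step, exactly where you flagged it.
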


Note that the theorem applies, in particular, if $\C$ is braided.

\begin{proof} Note first that if $\C^{(1)} = \C_{\ad} \subseteq \D$, then $\D = \C$.
Indeed, in this case $\D$ is a $\C_{\ad}$-sub-bimodule category of
$\C$ and therefore,  it is a sum of indecomposable sub-bimodule
categories $\D = \oplus_{g \in S}\C_g$, for some subset $S
\subseteq U(\C)$ (since the subcategories $\C_g$ are the
indecomposable $\C_{\ad}$-sub-bimodule categories of $\C$
\cite{gel-nik}). Hence $\D \cap \C_g = 0$, for all $g \notin S$,
implying, by assumption, that $S = U(\C)$. That is, $\D =
\oplus_{g \in U(\C)}\C_g = \C$.

\medbreak Consider the upper central series
$\dots \subseteq \C^{(m+1)} \subseteq \C^{(m)} \subseteq \dots \subseteq \C^{(1)} \subseteq \C^{(0)} = \C$.
Since $\C$ is nilpotent, there exists $n \geq 1$ such that
$\C^{(n)} = \vect_k$. In particular, $\C^{(n)} \subseteq \D$. Let
$m$ be the smallest nonnegative integer such that $\C^{(m)}
\subseteq \D$. We may assume that $m \geq 1$. Then $\C^{(m-1)}
\nsubseteq \D$ and $\C^{(m)} = (\C^{(m-1)})_{\ad} \subseteq \D$.

\medbreak Let $\D^{co}$ be \emph{commutator} subcategory of $\D$ in $\C$, that is, $\D^{co}$ is the fusion subcategory of $\C$ generated by all simple objects $Y$ of $\C$ such that $Y \otimes Y^*$ belongs to $\D$ \cite[Definition 4.10]{gel-nik}. Thus  $\mathcal (\D^{co})_{\ad} \subseteq \D \subseteq \D^{co}$, and $\D^{co}$ is the largest fusion subcategory of $\C$ with the property $\mathcal (\D^{co})_{\ad} \subseteq \D$.

For every $j = 0, \dots$, let $\D^{[j]}$ be the fusion subcategory defined as follows: $\D^{[0]} = \D$, $\D^{[j+1]} = (\D^{[j]})^{co}$, $j\geq 0$. As before, we have
$(\D^{[j+1]})_{\ad} \subseteq \D^{[j]} \subseteq \D^{[j+1]}$, for all $j = 0, 1, \dots$.

Since $\D \subseteq \D^{[j]}$,  then we get
\begin{equation}\label{fgg}\D^{[j]} \cap \C_g \neq 0, \quad \forall g \in U(\C), \; \forall j = 0, \dots \end{equation}
On the other hand, we have inclusions
\begin{equation}\label{inclusion} \C^{(m-j)} \subseteq \D^{[j]}, \quad \forall j = 0, \dots, m. \end{equation}
Indeed,  $\C^{(m)} = (\C^{(m-1)})_{\ad} \subseteq \D$, so that
$\C^{(m-1)} \subseteq \D^{co} = \D^{[1]}$. Assuming inductively
that $\C^{(m-j)} \subseteq \D^{[j]}$ for $1 \leq j < m$, we get
that $(\C^{(m-(j+1))})_{\ad} = \C^{(m-j)} \subseteq \D^{[j]}$.
Hence $\C^{(m-(j+1))} \subseteq (\D^{[j]})^{co} = \D^{[j+1]}$.

It follows from \eqref{inclusion} that $\C^{(1)} = \C_{\ad}
\subseteq \D^{[m-1]}$.  Therefore
$\D^{[m-1]} = \C$, in view of \eqref{fgg}. We now show by
induction that $\D^{[m-j]} = \C$, for all $j = 1, \dots, m$.
Suppose that $\D^{[m-j]} = \C$. Then $\C_{\ad} =
(\D^{[m-j]})_{\ad} \subseteq \D^{[m-(j+1)]}$. Hence
$\D^{[m-(j+1)]} = \C$.
In particular, $\D = \D^{[0]} = \C$. This finishes the proof of
the theorem.
\end{proof}

\section{The fusion subcategory generated by  a simple object}

Let $\C$ be a fusion category. Recall that an object $X$ of $\C$
is called \emph{faithful} if  for every $Y \in \Irr (\C)$, we have
$m(Y, X^{\otimes n}) > 0$, for some integer $n \geq 1$. Thus $X
\in \C$ is faithful if and only if  $\{ X\}$ generates $\C$ as a
fusion category.


\medbreak The following definition appears in \cite[Chapter
4]{KSZ}.

\begin{definition} Let $X$ be a nonzero object of $\C$.
The smallest nonnegative integer $n$ such that $m(\1, X^{\otimes n}) >0$
is called the \emph{order} of $X$. We shall denote it by
$\ord(X)$. \end{definition}

Suppose $X \in \Irr (\C)$. Then $\ord(X) = 1$ if and only if $X =
\1$. Also, $\ord(X) = 2$ if and only if $X = X^*$ and $\1 \neq X$.

\begin{remark} Recall that the \emph{rank} of $\C$, denoted $\rk (\C)$, is the cardinality of the set  $\Irr (\C)$.
It is shown in \cite[Proposition 5.1]{KSZ} that an object $X$ is
faithful if and only if the matrix of left multiplication by $X$
in the basis $\Irr (\C)$ of the Grothendieck ring of $\C$ is
\emph{indecomposable}. As in \cite[Corollary 5.1]{KSZ}, we get
that $\ord(X) \leq \rk (\C)$, for all nonzero object $X$. Indeed,
these results are stated \textit{loc. cit.} for the case where
$\C$ is the category of finite dimensional representations of a
semisimple Hopf algebra, but the proof applies \textit{mutatis
mutandis} in any fusion category as well.

\medbreak Suppose that $\C$ is a spherical fusion category. Then,
by \cite[Corollary 5.13]{ng-scha},  for every simple object $X$ of
$\C$, we have furthermore that $\ord(X) \leq \textrm{FSexp }(\C)$,
where $\textrm{FSexp }(\C) \in \mathbb N$ is the
\emph{Frobenius-Schur exponent} of $\C$.
\end{remark}

\medbreak Let $X$ be an object of $\C$ and let $\C[X]$ be the
fusion subcategory of $\C$ generated by $X$. The universal grading
group of $\C[X]$ will be denoted by $U(X)$.

\begin{remark} Let $\C$ be a fusion category and assume $\C$ has a unique maximal fusion subcategory.
Then $\C$ has a faithful simple object.

\begin{proof} Let $\D \subsetneq \C$ be the unique maximal fusion subcategory of $\C$.
If $X \in \Irr (\C)$ is such that $\C[X]\subsetneq \C$, then
$\C[X] \subseteq \D$.  Hence there must exist some $X \in \Irr
(\C)$ with $\C[X] = \C$. This proves the claim.
\end{proof} \end{remark}

\begin{example} (i) Let $\C$ be a pointed fusion category with a finite group $G$ of invertible objects. Then a simple object $g \in G$ is faithful if and only if $G$ is a cyclic group generated by $g$.

\medbreak (ii) Suppose $\C = \rep G$, where $G$ a finite group.
Let $X$ be an object of $\C$, and let  $\chi = \chi_X$ be the
character of $X$. Then $X$ is a faithful object of $\C$ if and
only if $\chi$ is a faithful character of $G$.

We have in this example $\C[X] = \rep (G/\ker \chi)$. Moreover, if
$X \in \Irr (\C)$ (that is, $X$ is an irreducible representation
of $G$), then $U(X) \simeq Z(G/\ker \chi) \simeq Z(\chi)/\ker
\chi$ \cite[Lemma 2.27]{isaacs}. Recall that the normal subgroups
$\ker \chi$ and $Z(\chi)$ are defined as \begin{equation}\ker \chi
= \{ g \in G|\, \chi(g) = \chi(1) \}, \quad Z(\chi) = \{ g \in
G|\, |\chi(g)| = (\chi\chi^*)(g) = \chi(1) \}.\end{equation}

\medbreak (iii) Let $H$ be a semisimple Hopf algebra and let $\C =
\rep H$ be the category of finite dimensional representations of
$H$. Let $\pi: H \to \End(V)$ be a finite dimensional
representation. The subspaces
\begin{align*}& \Rker(\pi)  = \{ h \in H|\, \pi(h_{(1)}) \otimes h_{(2)} = \id_V \otimes h \}, \\
& \RZ(\pi)  = \Rker(\pi  \otimes  \pi^*) = \{ h \in H|\, (\pi
\otimes \pi^*)(h_{(1)}) \otimes h_{(2)}  = \id_{V\otimes V^*}
\otimes h  \},\end{align*} are normal right coideal subalgebras of
$H$. (Similarly one can define normal left coideal subalgebras
$\operatorname{Lker}(\pi)$ and $\operatorname{LZ}(\pi)$ with
analogous properties.) The coideal subalgebras $\Rker(\pi)$ and
$\operatorname{Lker}(\pi)$ are studied in \cite{burciu}.

\medbreak The quotient Hopf algebra $H/H(\Rker \pi)^+$  satisfies
$\C[\pi] = \rep (H/H(\Rker \pi)^+)$. We have in addition:

\begin{proposition}\label{a-central} Let $A \subseteq H$ be a right coideal subalgebra.
Then $A \subseteq \RZ(\pi)$ if and only if, for all $a \in A$,
$\pi (a) = \lambda(a) \id_V$,  for some linear character $\lambda
\in G(A^*)$.  In particular, if $A \subseteq \RZ(\pi)$ and $\pi$
is faithful then $A \subseteq Z(H)$. \end{proposition}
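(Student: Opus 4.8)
The plan is to first convert the containment $A \subseteq \RZ(\pi)$ into a pointwise statement about how $A$ acts, exploiting that $A$ is a right coideal subalgebra. I would record the elementary observation that for any representation $\rho$ of $H$ and any right coideal subalgebra $A$, one has $A \subseteq \Rker(\rho)$ if and only if $\rho(a) = \epsilon(a)\,\id$ for every $a \in A$: the forward implication follows by applying $\id\otimes\epsilon$ to the defining equality $\rho(a_{(1)})\otimes a_{(2)} = \id\otimes a$, while for the converse, since $\Delta(A)\subseteq A\otimes H$ we have $a_{(1)}\in A$, so $\rho(a_{(1)})\otimes a_{(2)} = \epsilon(a_{(1)})\,\id\otimes a_{(2)} = \id\otimes a$. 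Taking $\rho = \pi\otimes\pi^*$ and recalling $\RZ(\pi)=\Rker(\pi\otimes\pi^*)$, this reduces $A\subseteq\RZ(\pi)$ to the condition $(\pi\otimes\pi^*)(a)=\epsilon(a)\,\id_{V\otimes V^*}$ for all $a\in A$. Under the identification $V\otimes V^*\cong\End(V)$, on which $H$ acts by $a\cdot T = \pi(a_{(1)})\,T\,\pi(Sa_{(2)})$, this says exactly that $a$ acts on $\End(V)$ by the scalar $\epsilon(a)$.

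The core is the forward implication, where I must pass from ``$a$ acts by the scalar $\epsilon(a)$ on all of $\End(V)$'' to ``$\pi(a)$ is itself a scalar''. Writing the hypothesis as $\sum\pi(a_{(1)})\,T\,\pi(Sa_{(2)}) = \epsilon(a)\,T$ for all $T\in\End(V)$, I would apply it to the element $a_{(1)}\in A$ and tensor the result with $a_{(2)}$; by coassociativity this yields $\sum\pi(a_{(1)})\,T\,\pi(Sa_{(2)})\otimes a_{(3)} = T\otimes a$. Applying $\id\otimes\pi$ and then composing the two factors in $\End(V)$ turns the right-hand side into $T\pi(a)$, while on the left $\pi(Sa_{(2)})\pi(a_{(3)}) = \pi\big(S(a_{(2)})a_{(3)}\big)$, and the antipode identity $\sum a_{(1)}\otimes S(a_{(2)})a_{(3)} = a\otimes 1$ collapses the left-hand side to $\pi(a)T$. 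Hence $\pi(a)T = T\pi(a)$ for every $T$, so $\pi(a)\in Z(\End(V)) = k\,\id_V$. Writing $\pi(a)=\lambda(a)\,\id_V$ then defines $\lambda\colon A\to k$, which is a linear character, i.e. an element of $G(A^*)$, because $\pi|_A$ is an algebra homomorphism. I expect this cancellation to be the main obstacle: the passage to a genuine scalar, rather than merely a scalar on each isotypic component, is exactly what the coideal hypothesis delivers through the identity above.

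For the converse, suppose $\pi(a)=\lambda(a)\,\id_V$ for all $a\in A$, with $\lambda$ a linear character. Applying $\pi$ to the antipode axiom $\sum a_{(1)}S(a_{(2)})=\epsilon(a)1$ and using $a_{(1)}\in A$ gives the identity $\sum\lambda(a_{(1)})\,\pi(Sa_{(2)})=\epsilon(a)\,\id_V$. Because $\pi(a_{(1)})=\lambda(a_{(1)})\,\id_V$ is scalar, $(\pi\otimes\pi^*)(a)=\sum\pi(a_{(1)})\otimes\pi^*(a_{(2)})=\id_V\otimes\pi^*\big(\sum\lambda(a_{(1)})a_{(2)}\big)$; and since $\pi^*$ sends $h$ to the transpose of $\pi(Sh)$, the previous identity, applied to $h=\sum\lambda(a_{(1)})a_{(2)}$, gives $\pi^*\big(\sum\lambda(a_{(1)})a_{(2)}\big)=\epsilon(a)\,\id_{V^*}$. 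Hence $(\pi\otimes\pi^*)(a)=\epsilon(a)\,\id_{V\otimes V^*}$, and the reduction of the first step yields $A\subseteq\RZ(\pi)$.

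Finally, the ``in particular'' clause is a one-line consequence of the characterization. If $A\subseteq\RZ(\pi)$ then each $\pi(a)$, $a\in A$, is a scalar and hence central in $\End(V)$, so $\pi(ah)=\pi(a)\pi(h)=\pi(h)\pi(a)=\pi(ha)$ for every $h\in H$. When $\pi$ is faithful, so that $\ker\pi=0$, this forces $ah=ha$ for all $h$; thus $a\in Z(H)$, and therefore $A\subseteq Z(H)$.
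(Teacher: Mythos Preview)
Your proof is correct and follows the same overall strategy as the paper: reduce $A \subseteq \RZ(\pi)$ to the condition that $A$ acts trivially on $V\otimes V^*\cong\End(V)$, and then relate this to $\pi(a)$ being a scalar. The paper's argument is considerably more terse---it asserts in one line that ``$h$ acts trivially on $\End(V)$'' implies $\pi(h)=\lambda(h)\id_V$, without spelling out why. Your Sweedler-notation computation (applying the trivial-action hypothesis to $a_{(1)}\in A$, tensoring with $a_{(2)}$, and collapsing via $\sum a_{(1)}\otimes S(a_{(2)})a_{(3)}=a\otimes 1$) is exactly what is needed to justify that step, and it makes explicit that the right-coideal hypothesis on $A$ (or equivalently on $\RZ(\pi)$ itself) is what allows the passage from ``acts trivially on $\End(V)$'' to ``$\pi(a)$ is central in $\End(V)$''. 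So your write-up fills a genuine detail the paper leaves to the reader, but the route is the same.
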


\begin{proof} Note that if an element $h \in H$ belongs to $\RZ(\pi) = \Rker (\pi \otimes \pi^*)$,
then $h$ acts trivially on $V \otimes V^*$. Since $V \otimes V^*
\simeq \End(V)$ as $H$-modules,  this implies that there exists
$\lambda(h) \in k$ such that $\pi(h) = \lambda(h) \id_V$.  In
other words, the normal right coideal subalgebra $\RZ(\pi)$ acts
by scalars on $V$. Clearly, the map $\lambda$ defines a linear
character on $\RZ(\pi)$. This implies the only if direction.

Conversely, suppose that for all $a \in A$, $\pi (a) = \lambda(a)
\id_V$,  for some linear character $\lambda \in G(A^*)$. Then $A$
acts trivially on $\End(V) \simeq V \otimes V^*$ and since $A$ is
a right coideal subalgebra, we get that $A \subseteq \RZ(\pi)$.
This finishes the proof of the lemma. \end{proof}

Let $K \subseteq H$ be the maximal central Hopf subalgebra. It
follows from \cite[Theorem 3.8]{gel-nik} that $K \simeq k^{U(H)}$,
where $U(H)$ is the universal grading group of $\C = \rep H$. Thus
$U(H)$ is isomorphic to the group $G(K^*)$ of one dimensional
representations of $K$.

\begin{corollary} Let $H$ be a semisimple Hopf algebra and let $U(H)$
be the universal grading group of the category $\rep H$.  Then
$k^{U(H)}$ is the unique maximal Hopf subalgebra of $H$ contained
in $\cap_{\pi \in \Irr(H)}\RZ(\pi)$.
\end{corollary}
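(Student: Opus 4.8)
The plan is to identify $k^{U(H)}$ with the maximal central Hopf subalgebra $K \subseteq H$, which the preceding discussion already realizes via \cite[Theorem 3.8]{gel-nik}, and then to prove the two containments that characterize $K$ as the unique maximal Hopf subalgebra contained in $R := \bigcap_{\pi \in \Irr(H)}\RZ(\pi)$. The whole argument rests on Proposition \ref{a-central} together with the elementary fact that, over an algebraically closed field, an element of a semisimple algebra acts by a scalar on every irreducible module if and only if it is central.

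First I would show that $K \subseteq R$. Since $K$ is central, Schur's lemma gives that each $z \in K$ acts on every simple module $V_\pi$ by a scalar $\lambda_\pi(z)$; because $\pi$ is an algebra map and $K$ is a Hopf subalgebra (in particular a right coideal subalgebra), the assignment $z \mapsto \lambda_\pi(z)$ is a linear character of $K$, that is, it lies in $G(K^*)$. Hence the ``if'' direction of Proposition \ref{a-central} applies and yields $K \subseteq \RZ(\pi)$ for every $\pi \in \Irr(H)$, so $K \subseteq R$.

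Next I would prove maximality and uniqueness by showing that any Hopf subalgebra $L \subseteq R$ satisfies $L \subseteq K$. Such an $L$ is a right coideal subalgebra contained in $\RZ(\pi)$ for every irreducible $\pi$, so the ``only if'' direction of Proposition \ref{a-central} provides, for each $\pi$, a character under which every $a \in L$ acts as a scalar on $V_\pi$. Thus each $a \in L$ acts by a scalar on every irreducible module; via the Artin--Wedderburn decomposition $H \cong \prod_\pi \End(V_\pi)$ this forces $a \in Z(H)$. Therefore $L$ is a central Hopf subalgebra, and by maximality of $K$ among central Hopf subalgebras we conclude $L \subseteq K$. Combined with the first step, $K$ is the unique maximal Hopf subalgebra of $H$ contained in $R$, and the identification $K \simeq k^{U(H)}$ finishes the proof.

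The steps are individually short and I do not anticipate a serious obstacle. The point requiring the most care is the converse implication used in the second step---that acting by a scalar on each simple module forces centrality---which genuinely uses semisimplicity and algebraic closedness of $k$ through Artin--Wedderburn, rather than following formally from Proposition \ref{a-central} alone. One should also verify that the scalar maps occurring are honest algebra homomorphisms, that is, elements of $G(K^*)$ and of $G(L^*)$ respectively, so that both directions of Proposition \ref{a-central} may be legitimately invoked.
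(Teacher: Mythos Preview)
Your proposal is correct and follows essentially the same route as the paper: identify $k^{U(H)}$ with the maximal central Hopf subalgebra $K$, use Proposition \ref{a-central} together with Schur's lemma to get $K \subseteq \bigcap_\pi \RZ(\pi)$, and for the converse use Proposition \ref{a-central} to see that any Hopf subalgebra contained in the intersection acts by scalars on every irreducible, hence is central (the paper phrases this last step as ``irreducible representations separate points of $H$'' rather than invoking Artin--Wedderburn, but the content is the same).
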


\begin{proof} As remarked above, we have an isomorphism $k^{U(H)} \simeq
K$. Since $K$ is a central Hopf subalgebra, we have   $K \subseteq
\cap_{\pi \in \Irr(H)}\RZ(\pi)$, by Proposition \ref{a-central}.
Let $A \subseteq H$ be a Hopf subalgebra contained in $\cap_{\pi
\in \Irr(H)}\RZ(\pi)$. Then $A$ acts as scalar multiplication by a
linear character on any irreducible representation. This implies
that $A \subseteq Z(H)$, since irreducible representations
separate points of $H$. Hence $A \subseteq K$. This proves the
corollary. \end{proof}
\end{example}

Let $\C$ be a fusion category and let $X\in \Irr (\C)$. Recall
that by the universal property of $U(X)$ there is a group
homomorphism $\phi_X = \phi_{\C[X]}: U(X) \to U(\C)$. This is
determined as follows: for every $t \in U(X)$, $\phi_X(t) = g \in
U(\C)$ if and only if $\C[X]_t \subseteq \C_g$. The following
lemma will be used later on.

\begin{lemma}\label{udeC} We have $U(\C) = \cup_{X \in \Irr (\C)}\phi_X(U(X))$.  \end{lemma}

\begin{proof} Let $g \in U(\C)$ and let $Y$ be a simple object in $\C_g$.
By definition of $\phi_Y$, we have $g \in \phi_Y(U(Y))$. This
proves the lemma. \end{proof}

We end this section by recalling some known families of examples.
Let $k = \mathbb C$.

\begin{example}{(Verlinde categories for $\mathfrak{sl}_2$.)}
Let $n$ be a nonnegative integer and let $q = e^{\frac{i
\pi}{n+2}}$.  Let $\C_n = \C_n(\mathfrak{sl}_2)$ be the
semisimplification of the category of representations of
$U_q(\mathfrak{sl}_2)$ \cite{AP, BK}. It is well-known that $\C_n$
is a modular fusion category over $k$. Isomorphism classes of
simple objects in $\C_n$ are represented by objects $X_i$, $0 \leq
i \leq n$, with $X_0 = \1$, $X_i^* = X_i$, and obeying the
truncated Clebsch-Gordan fusion rules:
\begin{equation}\label{FR} X_i \otimes X_j \simeq \oplus_{l = |i-j|, i+j \equiv l (2)}^{min(i+j, 2n-(i+j))} X_l. \end{equation}
The Frobenius-Perron dimension of $X_j$ is given by $\FPdim X_j =
\frac{q^{j+1}-q^{-(j+1)}}{q-q^{-1}} = \frac{\sin
((j+1)\theta)}{\sin (\theta)}$, where $\theta = \frac{\pi}{n+2}$.
In particular, there are exactly two invertible objects $\1 = X_0$
and $g = X_n$.

There is a faithful $\mathbb Z_2$-grading on $\C_n$ given by $\C_n
= \C_n^+ \oplus \C_n^-$,  where $\C_n^{\pm}$ is the full abelian
subcategory with simple objects $X_i$, $i$ even (respectively,
odd). Letting $X = X_1$, relation \eqref{FR} implies that $\C_n =
\C_n[X]$, so that $X$ is a faithful simple object of order $2$. We
have in this example $U(\C_n) \simeq G(\C_n) \simeq \mathbb Z_2$.
\end{example}

\begin{example}{(Modular near-group categories.)}
Let $\C$ be a near-group category of  type $(G, \kappa)$, as
described in the Introduction. Then  we have $U(\C) = 1$ if
$\kappa > 0$, and $U(\C) = \mathbb Z_2$ if $\kappa = 0$. Indeed,
in the first case, it is clear from \eqref{near-group} that
$\C_{\ad} = \C$, while in the second case $\C_{\ad} \simeq \C[G]$.

\medbreak Let $\C$ be a near-group category of type $(G, \kappa)$
and suppose that $\C$ admits a \emph{modular} structure.  Then  $G
= G(\C) \simeq U(\C)$ is of order $1$ or $2$.

\medbreak If $\kappa \neq 0$, then $G = 1$, so $\C$ is of rank $2$
and has a non-invertible object $X$, such that $X^{\otimes 2} = \1
\oplus X$. By \cite{ostrik-rk2} there are $4$ nonequivalent
braided fusion categories with these fusion rules, called
\emph{Yang-Lee} categories, and they are modular.

If $\kappa = 0$, then $G \simeq \mathbb Z_2$. The fusion rule
\eqref{near-group} is in  this case $X^{\otimes 2} = \1 \oplus a$,
where $\langle a \rangle = G$. Fusion categories with these fusion
rules are called \emph{Ising categories}. They are classified  in
\cite[Appendix B]{DGNOI}. In particular, every braided Ising
category is modular \cite[Corollary B.12]{DGNOI}. \end{example}

\begin{example} (Fermionic Moore-Read fusion rule.) Let $G = \langle g|\, g^4 = e \rangle$
denote the cyclic group of order $4$. Consider the commutative
fusion rules on the set $G \cup \{ X, X'\}$ determined by $h
\otimes h' \simeq hh'$, $h, h' \in G$, and
\begin{align*}& g^2 \otimes X \simeq X, \quad g^2 \otimes X' \simeq
X', \quad X \otimes X' \simeq \1 \oplus g^2, \\
& g \otimes X \simeq X', \quad g \otimes X' \simeq
X, \quad X \otimes X \simeq g \oplus g^3, \\
& g^3 \otimes X \simeq X', \quad g^3 \otimes X' \simeq X, \quad X'
\otimes X' \simeq g \oplus g^3. \end{align*} It is known that, up
to equivalence of tensor categories, there are four fusion
categories with these fusion rules, non of them braided
\cite{bonderson, liptrap}. If $\C$ is any fusion category
satisfying these fusion rules, then $\FPdim \C = 8$ and $\C$ is
nilpotent of nilpotency class $2$. We have in addition $G = G(\C)
\simeq U(\C)$ and $\C_{\ad} = \C[g^2]$.

Observe that in these examples $g^2$ is the only simple object of
order $2$. Also, $X$ is of order $4$ and $X^{\otimes 2}$
decomposes as a direct sum of simple objects of order $4$.
\end{example}

\begin{example}{(Faithful simple objects of order $2$.)}
Let $\C$ be a fusion category over $k$ and assume $X$ is a
faithful simple object of $\C$ of order $2$,  that is, $\C =
\C[X]$, $X  \neq \1$ and $X \simeq X^*$.

\medbreak  Let $q \in k^{\times}$ and consider the tensor category
$\rep \SL_q(2)$ of finite dimensional comodules over the Hopf
algebra $\SL_q(2)$. Suppose $q$ is generic, that is, it is not a
root of unity or $q = \pm 1$. Then $\rep \SL_q(2)$ is a semisimple
tensor category whose Grothendieck ring is  isomorphic to the
Grothendieck ring of the category $\rep \SL(2)$. The category
$\rep \SL_q(2)$ has a self-dual faithful simple object $V$
corresponding to the standard $2$-dimensional representation of
$\SL(2)$.

Fix an isomorphism $\Phi: X \to X^*$ such that the induced map $\1
\to X \otimes X^* \overset{\Phi \otimes \Phi^{-1}}\to X^* \otimes
X \to \1$ is given by the scalar $-(q + q^{-1})$, where $q \in
k^{\times }$ is \emph{generic}.  In this case there exists a
unique tensor functor $F : \rep \SL_q(2) \to \C$  such that $F(V )
= X$ and $F(\phi) = \Phi$, where $\phi: V \to V^*$ is a fixed
isomorphism in $\rep \SL_q(2)$. See \cite[Theorem 2.1]{EO},
\cite[Chapter XII]{turaev}. Since $X$ is a faithful object and
$F(V) = X$, then the functor $F$ is surjective. Thus every such
fusion category $\C$ is a quotient of $\rep \SL_q(2)$ for an
appropriate value of $q$. (Note that when $q$ is a root of unity,
the category $\rep \SL_q(2)$ has a similar universal property
\cite[Theorem 2.3]{EO}.)
\end{example}

\begin{example}{(Faithful comodules of dimension $2$.)}
Finite dimensional cosemi\-simple Hopf algebras with a self-dual
faithful irreducible comodule $V$ of dimension $2$ were classified
in \cite{BN}.

Let $\nu(V) = \pm 1$ denote the Frobenius-Schur indicator of $V$.
If $\nu(V) = -1$, then  $H$ is commutative and isomorphic to the
dual group algebra $k^{\widetilde \Gamma}$,  where $\widetilde
\Gamma$ is a non-abelian binary polyhedral group.

If, on the other hand, $\nu(V) = 1$, then either $H$ is
commutative and isomorphic to $k^{D_n}$, $n \geq 3$, where $D_n$
is the dihedral group of order $2n$, or $H$ is isomorphic to one
of certain nontrivial Hopf algebra deformations $\mathcal
A[\widetilde \Gamma]$ or $\mathcal B[\widetilde \Gamma]$ of a
binary polyhedral group $\widetilde \Gamma$. In the last case, $H$
fits into an an abelian cocentral exact sequence $k \to k^{\Gamma}
\to H \to k\mathbb Z_2 \to k$, where $\widetilde \Gamma /\mathbb
Z_2 = \Gamma \subseteq \textrm{PSL}_2(k)$ is a finite polyhedral
group of even order. The universal grading group of the category
$\C$ of finite dimensional $H$-comodules is $\mathbb Z_2$, and the
adjoint subcategory is the category of representations of the
commutative Hopf subalgebra $k^{\Gamma}$.
\end{example}

\section{Faithful simple objects and the universal grading
group}\label{s-faithful}

Let $G$ be a finite group and let $\C = \rep G$. Then $U(\C)
\simeq Z(G)$. A classical result says that if $G$ has a faithful
character, then the center $Z(G)$ is cyclic; see for instance
\cite[Theorem 2.32 (a)]{isaacs}. The analogous statement is true
for any fusion category, as follows from the next theorem.

\begin{theorem}\label{G-cyclic} Let $\C$ be a fusion category and let $U(\C)$ be its universal grading group.
Then the following hold:
\begin{enumerate}\item[(i)] Suppose $X \in \Irr (\C)$ and let $g \in U(\C)$ such that $X \in \C_g$.
Then the order of $g$ divides the order of $X$.
\item[(ii)] Suppose $\C$ is generated by simple objects $X_1, \dots, X_m$ as a fusion
category, and let $g_i \in U(\C)$ such that $X_i \in \C_{g_i}$,
$1\leq i \leq m$. Then $g_1, \dots, g_m$ generate the group
$U(\C)$.
\end{enumerate}
In particular, if $X \in \Irr (\C)$ is faithful, then the group
$U(\C)$ is cyclic and its order divides the order of $X$.
\end{theorem}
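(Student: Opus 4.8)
The plan is to reduce all three assertions to the single structural property of the universal grading map $\lambda_\C \colon \Irr(\C) \to U(\C)$: it is multiplicative along the fusion rules, i.e. $\lambda_\C(Z) = \lambda_\C(X)\lambda_\C(Y)$ whenever $m(Z, X \otimes Y) > 0$, with $\lambda_\C(\1) = e$ since $\1 \in \C_e = \C_{\ad}$. No finer information about $U(\C)$ is needed; in particular I will not invoke whether $U(\C)$ is abelian.

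For (i), I would track the grading through the tensor powers of $X$. Write $g = \lambda_\C(X)$. By multiplicativity, every simple constituent of $X^{\otimes n}$ lies in $\C_{g^n}$, so $X^{\otimes n}$ is homogeneous of degree $g^n$. Taking $n = \ord(X)$ we have $m(\1, X^{\otimes n}) > 0$, so $\1 \in \C_{g^n}$; since $\1 \in \C_e$ this forces $g^n = e$. Hence $\ord(g)$ divides $n = \ord(X)$, which is exactly (i).

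For (ii), I would pass to the fusion subcategory attached to the subgroup generated by the $g_i$. Let $H = \langle g_1, \dots, g_m\rangle \subseteq U(\C)$ and set $\C_H = \oplus_{h \in H}\C_h$. As $H$ is a subgroup, $\C_H$ is closed under tensor products and duals and contains $\C_e = \C_{\ad}$, so it is a fusion subcategory. Each generator satisfies $X_i \in \C_{g_i} \subseteq \C_H$, so $\C_H$ contains the fusion subcategory generated by $X_1, \dots, X_m$, which is $\C$ by hypothesis; thus $\C_H = \C$. Finally, the universal grading is faithful, so $\C_g \neq 0$ for every $g \in U(\C)$; comparing the two decompositions $\C = \oplus_{h \in H}\C_h = \oplus_{g \in U(\C)}\C_g$ forces $\C_g = 0$ for all $g \notin H$, whence $H = U(\C)$. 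This proves (ii).

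The final assertion then follows by specializing to a single faithful $X \in \Irr(\C)$: applying (ii) with $m = 1$ gives $U(\C) = \langle g \rangle$, which is cyclic, where $g = \lambda_\C(X)$, and (i) gives that $\ord(g) = |U(\C)|$ divides $\ord(X)$. I do not expect a genuine obstacle here, since the whole argument is a direct exploitation of the grading. The one point demanding a little care is the closing step of (ii): one must invoke \emph{faithfulness} of the universal grading to exclude nonzero homogeneous components outside $H$, rather than merely recording the categorical equality $\C_H = \C$.
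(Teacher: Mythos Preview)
Your proof is correct and, for part (i) and the final assertion, essentially identical to the paper's. For part (ii) you package the argument slightly differently: the paper picks an arbitrary $g \in U(\C)$, chooses a simple $Y \in \C_g$, observes that $Y$ is a constituent of some tensor product $X_{i_1}\otimes\cdots\otimes X_{i_t}$ (since the $X_i$ generate $\C$), and concludes $g = g_{i_1}\cdots g_{i_t}$; you instead form the fusion subcategory $\C_H = \oplus_{h\in H}\C_h$, note it contains the generators and hence equals $\C$, and then invoke faithfulness of the grading. Both routes are immediate consequences of the multiplicativity of $\lambda_\C$ and amount to the same idea viewed from opposite ends; neither buys anything the other does not.
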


As a consequence, if $\C$ has a self-dual faithful simple object, then $U(\C) = 1$ or $U(\C) \simeq \mathbb Z_2$.

\begin{proof} (i) Let $n = \ord(X)$, so that $m(\1, X^{\otimes n}) > 0$. On the
other hand, $X^{\otimes n} \in \C_{g^n}$,  and since $\1 \in
\C_e$, we get $\C_{g^n} = \C_e$. Hence $g^n = e$, and  the order
of $g$ divides $n$.

\medbreak (ii) Since $X_i \in \C_{g_i}$, $1\leq i\leq m$, then
$X_{i_1}\otimes \dots \otimes X_{i_t} \in \C_{g_{i_1} \dots
g_{i_t}}$, for all $1 \leq i_1, \dots, i_t \leq m$. Let $g \in
U(\C)$ and let $Y \in \Irr (\C)$ such that $Y \in \C_g$. By
assumption,  $Y$ appears with positive multiplicity in some tensor
product $X_{i_1}\otimes \dots \otimes X_{i_t}$. Then $Y \in
\C_{g_{i_1} \dots g_{i_t}}$ and we get that $g = g_{i_1} \dots
g_{i_t}$. Thus $U(\C) = \langle g_{i_1}, \dots, g_{i_t} \rangle$,
as claimed. \end{proof}

\begin{example} Let $H$ be a semisimple Hopf algebra and let $K \subseteq H$ be the
maximal central Hopf subalgebra of $H$, so that $K \simeq
k^{U(\C)}$, where $\C = \rep H$ is  the fusion category of finite
dimensional representations of $H$. Theorem \ref{G-cyclic} implies
that if $H$ has a faithful irreducible character $\chi$, then $K
\simeq k^{\mathbb Z_m}$, where $m$ divides the order of $\chi$.
(Compare with \cite[Theorem 3.5]{BN}.) \end{example}

\begin{corollary}\label{sd-faithful} Let $p$ be a prime number. Suppose $\C$
 is a nilpotent fusion category such that $\C$  has a  simple
object of order $p$. Then the Frobenius-Perron dimension of $\C$ is divisible by $p$. In particular, if $\C$ has a self-dual simple object, then $\FPdim \C$ is even.
\end{corollary}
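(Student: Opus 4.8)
The plan is to pass to the fusion subcategory generated by the given simple object, where that object becomes faithful, apply Theorem \ref{G-cyclic} to identify the universal grading group, and then combine the Frobenius-Perron dimension formula for extensions with the divisibility of dimensions of fusion subcategories. The point of the reduction is that the universal grading of $X$ in $\C$ itself could be trivial (if $X$ happens to lie in $\C_{\ad}$), so working inside $\C[X]$ is essential.

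Concretely, I would let $X \in \Irr(\C)$ with $\ord(X) = p$ and set $\D = \C[X]$. Since the order depends only on the multiplicities $m(\1, X^{\otimes n})$, which agree whether computed in the full subcategory $\D$ or in $\C$, we have $\ord_\D(X) = p$, and $X$ is by construction a faithful simple object of $\D$. I would also observe that $\D$ is nilpotent: the generators $Y \otimes Y^*$ of $\D_{\ad}$, with $Y \in \Irr \D \subseteq \Irr \C$, lie in $\C_{\ad}$, so $\D_{\ad} \subseteq \C_{\ad}$, and inductively $\D^{(n)} \subseteq \C^{(n)}$; hence $\C^{(n)} = \vect_k$ forces $\D^{(n)} = \vect_k$.

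The key step is to determine $U(\D)$. By Theorem \ref{G-cyclic}, since $X$ is faithful in $\D$, the group $U(\D)$ is cyclic of order dividing $\ord(X) = p$. On the other hand $\D \neq \vect_k$, because $\ord(X) = p > 1$ gives $X \neq \1$; and a nontrivial nilpotent fusion category cannot satisfy $\D_{\ad} = \D$, since then its upper central series would be the stationary sequence $\D^{(n)} = \D$ and could never reach $\vect_k$. Thus $\D_{\ad} \subsetneq \D$, which is equivalent to $U(\D) \neq 1$. As $|U(\D)|$ divides the prime $p$ and is not $1$, I conclude $U(\D) \simeq \mathbb Z_p$.

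Finally I would extract the dimension statement. Since $\D$ is a $U(\D)$-extension of $\D_{\ad} = \D_e$, the formula $\FPdim \D = |U(\D)|\,\FPdim \D_{\ad}$ recalled in Section 2 from \cite[Proposition 8.20]{ENO} gives $\FPdim \D = p\,\FPdim \D_{\ad}$. Being nilpotent, both $\D$ and $\D_{\ad}$ are weakly integral \cite{gel-nik}, so their Frobenius-Perron dimensions are positive integers and $p \mid \FPdim \D$. It then remains to pass from $\D$ to $\C$: the ratio $\FPdim \C / \FPdim \D$ is an algebraic integer \cite[Proposition 8.15]{ENO}, and since both dimensions are positive integers the ratio is an integer, so $\FPdim \D$ divides $\FPdim \C$ and therefore $p \mid \FPdim \C$. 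For the last assertion, a self-dual simple object $X \neq \1$ has $\ord(X) = 2$, so the case $p = 2$ shows $\FPdim \C$ is even. The only delicate point is this last divisibility $\FPdim \D \mid \FPdim \C$, which uses the weak integrality of nilpotent categories to upgrade the algebraic-integrality of the ratio into an honest divisibility of integers.
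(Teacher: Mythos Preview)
Your proof is correct and follows essentially the same route as the paper: pass to $\D = \C[X]$, use nilpotency of $\D$ to get $U(\D) \neq 1$, apply Theorem \ref{G-cyclic} to conclude $U(\D) \simeq \mathbb Z_p$, and then invoke $\FPdim \D \mid \FPdim \C$ via \cite[Proposition 8.15]{ENO}. The paper's version is terser, leaving implicit the points you spell out (nilpotency of $\C[X]$, integrality of the Frobenius--Perron dimensions), but the argument is the same.
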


Note that the Frobenius-Perron dimension of a nilpotent fusion
category is always an integer.

\begin{proof} Let $X \in \Irr (\C)$ of order $p$.  Since $\C[X]$ is also nilpotent, then
$U(X) \neq 1$ and therefore $U(X) \simeq \mathbb Z_p$, by Theorem
\ref{G-cyclic}. Hence $p$ divides $\FPdim \C[X]$. This implies the
corollary, since $\FPdim \C[X]$ divides $\FPdim \C$
\cite[Proposition 8.15]{ENO}.
\end{proof}

Combining Lemma \ref{udeC} and Theorem \ref{G-cyclic} we get the
following:

\begin{corollary}\label{exp-udec} Let $\C$ be a  fusion category. Then the following hold:
\begin{enumerate}\item[(i)]
Let $n \in \mathbb N$ and suppose  that the order of $X$ divides
$n$, for all $X \in \Irr \C$. Then the exponent of $U(\C)$ divides
$n$. In particular, if all simple objects of $\C$ are self-dual,
then $U(\C)$ is an elementary abelian $2$-group.
\item[(ii)] Let $p$ be a prime number. Suppose that $\C$ is nilpotent and the order of $X$ is a power of $p$,
for all simple object $X$ of $\C$. Then $\FPdim \C = p^m$, for
some $m \geq 1$. \end{enumerate} \end{corollary}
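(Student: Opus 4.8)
The plan is to treat the two parts separately, deriving (i) directly from Lemma \ref{udeC} and Theorem \ref{G-cyclic}, and then bootstrapping (ii) by an induction that feeds (i) into the extension formula $\FPdim \C = |U(\C)|\,\FPdim \C_{\ad}$.

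For (i), I would first fix $X \in \Irr(\C)$ and apply Theorem \ref{G-cyclic} to the fusion subcategory $\C[X]$, in which $X$ is by construction a faithful simple object. Since the order of $X$ computed in $\C[X]$ agrees with its order in $\C$ (the decomposition of $X^{\otimes n}$ and the position of $\1$ are unchanged, as $\1 \in \C[X]$), the theorem yields that $U(X) = U(\C[X])$ is cyclic of order dividing $\ord(X)$, hence of exponent dividing $n$. As $\phi_X \colon U(X) \to U(\C)$ is a group homomorphism, every element of $\phi_X(U(X))$ has order dividing $n$. By Lemma \ref{udeC} the sets $\phi_X(U(X))$, $X \in \Irr(\C)$, cover $U(\C)$, so every element of $U(\C)$ has order dividing $n$; that is, the exponent of $U(\C)$ divides $n$. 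The self-dual case is the specialization $n = 2$, using that $\ord(X) \leq 2$ for a self-dual simple object; since $U(\C)$ is abelian of exponent dividing $2$, it is an elementary abelian $2$-group.

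For (ii), I would argue by induction on the nilpotency class of $\C$. Taking $n$ to be a sufficiently large power of $p$ bounding all the orders $\ord(X)$, part (i) shows that the exponent of $U(\C)$ is a power of $p$; since every element then has order a power of $p$, Cauchy's theorem forces $|U(\C)|$ to be a power of $p$. The adjoint subcategory $\C_{\ad} = \C^{(1)}$ is again nilpotent, of nilpotency class one less than that of $\C$, and its simple objects are simple objects of $\C$, so their orders remain powers of $p$. By the inductive hypothesis $\FPdim \C_{\ad}$ is a power of $p$, and since $\C$ is a $U(\C)$-extension of $\C_{\ad}$ we obtain $\FPdim \C = |U(\C)|\,\FPdim \C_{\ad}$, a power of $p$. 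The base case $\C = \vect_k$ gives $\FPdim \C = 1$; for $\C \neq \vect_k$, nilpotency forces $\C_{\ad} \subsetneq \C$, whence $|U(\C)| \geq p$ and $m \geq 1$.

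The routine points — that the order of a simple object is insensitive to passing to a fusion subcategory containing it, and that $\C_{\ad}$ inherits nilpotency with strictly smaller class — are where I would spend the most care, since the whole induction in (ii) rests on the hypotheses of the corollary being correctly transmitted to $\C_{\ad}$. The one genuinely substantive point is the termination of the induction: I must invoke that a nontrivial nilpotent fusion category satisfies $\C_{\ad} \subsetneq \C$ (equivalently $U(\C) \neq 1$), which is exactly the non-stabilization of the upper central series guaranteed by nilpotency, so that $\FPdim \C_{\ad}$ is a proper divisor of $\FPdim \C$ and the recursion genuinely descends.
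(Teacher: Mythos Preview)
Your proof is correct and follows essentially the same approach as the paper: part (i) combines Lemma \ref{udeC} with Theorem \ref{G-cyclic} applied to each $\C[X]$, and part (ii) proceeds by induction along the upper central series, using that $|U(\C)|$ is a power of $p$ together with $\FPdim \C = |U(\C)|\,\FPdim \C_{\ad}$. The paper's version is terser but identical in substance; your extra care about orders being unchanged in fusion subcategories and about the base of the induction is well placed.
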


\begin{proof} By Theorem \ref{G-cyclic} each of the groups $U(X)$
is cyclic of order dividing $n$. Thus (i) follows from Lemma
\ref{udeC}. To prove (ii), observe that it follows from Lemma
\ref{udeC} and Theorem \ref{G-cyclic}, that $U(\C)$ is a
$p$-group, hence of order a power of $p$. Since $\C$ is nilpotent,
$\FPdim \C_{\ad} < \FPdim \C$. By induction, we may assume that
$\FPdim \C_{\ad}$ is a power of $p$. Then so is $\FPdim \C$.
\end{proof}

The next lemma gives a sufficient condition for a fusion category
$\C$ to have a self-dual simple object, in terms of the universal
grading of $\C$.

\begin{lemma} Suppose the nilpotency class of $\C$ is $\leq 2$.
Assume in addition that $g \in U(\C)$ is of order $2$ and the rank
of $\C_g$ is $1$. Then, if $X \in \C_g$ is a simple object of
$\C$, $X$ has order $2$. \end{lemma}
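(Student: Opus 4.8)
The plan is to show that the unique simple object $X \in \C_g$ must satisfy $X \simeq X^*$, since then $X \neq \1$ (because $g \neq e$, as $g$ has order $2$) together with self-duality forces $\ord(X) = 2$ by the characterization recorded just after the definition of order. So the entire problem reduces to proving that $X$ is self-dual.

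First I would observe that since $g$ has order $2$, we have $g^{-1} = g$, and therefore $\C_g^* = \C_{g^{-1}} = \C_g$ by the defining property of a grading. Hence $X^* \in \C_g$. But $\C_g$ has rank $1$ by hypothesis, meaning $X$ is (up to isomorphism) the only simple object sitting in $\C_g$; since $X^*$ is also simple and lies in $\C_g$, we conclude $X^* \simeq X$. This already gives self-duality directly, and it seems not even to require the nilpotency-class hypothesis.

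Given the self-duality $X \simeq X^*$, the final step is immediate: $X \in \C_g$ with $g \neq e$ means $X \notin \C_e = \C_{\ad}$, so in particular $X \neq \1$ (as $\1 \in \C_e$). Combined with $X \simeq X^*$, the remark following the definition of $\ord$ yields $\ord(X) = 2$ exactly.

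The one thing that makes me pause is that the hypothesis on the nilpotency class being $\leq 2$ appears unused in this argument, which suggests either that the intended proof is genuinely this short (with the nilpotency hypothesis present only to fix context or because it is inherited from the surrounding discussion and not logically needed here), or that I am missing a subtlety in how $\C_g$ having rank $1$ interacts with the claim. The possible obstacle, then, is whether rank $1$ really pins down $X$ up to isomorphism in the way I assumed: rank is the cardinality of the set of isomorphism classes of simple objects in $\C_g$, so rank $1$ does mean a single isomorphism class, and the argument stands. I would therefore present the self-duality deduction from $\C_g^* = \C_g$ as the heart of the proof and flag that the nilpotency hypothesis is not strictly needed, unless on rereading the surrounding results there is an implicit convention that forces me to invoke it.
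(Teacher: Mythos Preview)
Your proof is correct and takes a genuinely different, and simpler, route than the paper's.

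The paper argues as follows: the hypothesis that the nilpotency class is at most $2$ means $\C_{\ad}$ is pointed. Since $g^2 = e$, the object $X^{\otimes 2}$ lies in $\C_e = \C_{\ad}$ and therefore decomposes as $\oplus_{s \in S} s$ for some subset $S$ of the group $\Gamma$ of invertible objects of $\C_{\ad}$ (each invertible appearing with multiplicity at most $1$). A dimension count using $(\FPdim X)^2 = \FPdim \C_g = \FPdim \C_{\ad} = |\Gamma|$ then forces $S = \Gamma$, so in particular $\1$ occurs in $X^{\otimes 2}$, giving $\ord(X) = 2$.

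Your argument bypasses all of this: from $g = g^{-1}$ you get $X^* \in \C_{g^{-1}} = \C_g$, and rank $1$ immediately gives $X^* \simeq X$; together with $X \neq \1$ this yields $\ord(X) = 2$. You are right that the nilpotency hypothesis is not needed for the lemma as stated, so your argument actually proves a stronger statement. The paper's longer computation does extract more information along the way (namely the full decomposition $X^{\otimes 2} \simeq \oplus_{a \in \Gamma} a$), which anticipates the generalized Tambara--Yamagami structure studied in the next section, but for the bare conclusion $\ord(X) = 2$ your proof is cleaner and entirely sufficient.
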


\begin{proof} The assumption on the nilpotency class of $\C$ means
that $\C_{\ad} \subseteq \C_{pt}$. Let $\Gamma$ be the set of
isomorphism classes of invertible objects of $\C_{\ad}$. If $X \in
\C_g$, then $\1 \neq X$ and $X^{\otimes 2} \in \C_{g^2} \subseteq
\C_{\ad}$, because $g$ is of order $2$. Since the multiplicity of
an invertible object in $X^{\otimes 2}$ is $\leq 1$, then
$X^{\otimes 2} \simeq \oplus_{s \in S}s$, for some subset $S
\subseteq \Gamma$. In particular $(\FPdim X)^2 = |S|$

On the other hand, since $\C_g$ is of rank $1$, then $(\FPdim X)^2
= \FPdim \C_g = \FPdim \C_{\ad} = |\Gamma|$. Hence $S = \Gamma$,
and thus $m(\1, X^{\otimes 2}) = 1$. This proves the lemma.
\end{proof}

\medbreak Recall that if $G$ is a finite nilpotent group with
cyclic center, then $G$ has a faithful irreducible character
\cite[Theorem 2.32(b) and Problem 4.3]{isaacs}. Our next theorem
establishes the analogous fact for braided fusion categories, thus
giving a partial converse of Theorem \ref{G-cyclic} in this case.

We need first the following lemma.

\begin{lemma}\label{prod-faithful} Let $\C_1, \C_2$ be nilpotent fusion
categories with commutative Grothen\-dieck ring. Suppose that
$X_i$ is a faithful simple object of $\C_i$, $i = 1, 2$. Assume in
addition that the orders of $U(\C_1)$ and $U(\C_2)$ are relatively
prime. Then $X_1 \boxtimes X_2$ is a faithful simple object of
$\C_1 \boxtimes \C_2$.
\end{lemma}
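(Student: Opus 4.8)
The plan is to reduce faithfulness of $X_1 \boxtimes X_2$ to a synchronization problem between the tensor powers of $X_1$ and of $X_2$, and to solve it by the Chinese Remainder Theorem once the relevant periods have been identified with $|U(\C_1)|$ and $|U(\C_2)|$. Since $\Irr(\C_1 \boxtimes \C_2) = \{Z \boxtimes W : Z \in \Irr(\C_1),\, W \in \Irr(\C_2)\}$, and since $(X_1\boxtimes X_2)^{\otimes n} = X_1^{\otimes n} \boxtimes X_2^{\otimes n}$ with $m(Z\boxtimes W,\, A \boxtimes B) = m(Z,A)\,m(W,B)$, it suffices to show: for every $Z \in \Irr(\C_1)$ and $W \in \Irr(\C_2)$ there is a \emph{common} exponent $n$ with $m(Z, X_1^{\otimes n}) > 0$ and $m(W, X_2^{\otimes n}) > 0$.

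First I treat a single factor. By Theorem \ref{G-cyclic}, $U(\C_i)$ is cyclic of order $d_i := |U(\C_i)|$, generated by the class $g_i$ with $X_i \in (\C_i)_{g_i}$. For $Z \in \Irr(\C_i)$ put $S_Z^{(i)} = \{ n \ge 1 : m(Z, X_i^{\otimes n}) > 0\}$. Since $X_i^{\otimes n} \in (\C_i)_{g_i^{n}}$, the set $S_Z^{(i)}$ lies in a single residue class modulo $d_i$. Moreover $S_{\1}^{(i)}$ is a numerical subsemigroup of $\mathbb N$ contained in $d_i \mathbb N$ (if $\1 \in X_i^{\otimes n}$ then $g_i^{n} = e$); each $S_Z^{(i)}$ is nonempty because $X_i$ is faithful; and $S_Z^{(i)}$ is stable under adding any element of $S_{\1}^{(i)}$, since a copy of $Z$ in $X_i^{\otimes n}$ tensored with a copy of $\1$ in $X_i^{\otimes m}$ produces a copy of $Z$ in $X_i^{\otimes(n+m)}$.

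The crucial step is to prove that $e_i := \gcd(S_{\1}^{(i)})$ equals $d_i$. The inclusion $S_\1^{(i)} \subseteq d_i\mathbb N$ gives $d_i \mid e_i$; for the reverse divisibility I build an auxiliary $\mathbb Z_{e_i}$-grading and invoke the universal property of $U(\C_i)$. Since the fusion matrix of the faithful object $X_i$ is indecomposable, the associated digraph on $\Irr(\C_i)$ is strongly connected (here the commutative Grothendieck ring is used, so that a single fusion matrix controls faithfulness), whence any two exponents in $S_Z^{(i)}$ are congruent modulo $e_i$ and $c(Z) := n \bmod e_i$ (for any $n \in S_Z^{(i)}$) is well defined. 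Using that $Z \otimes W$ is a summand of $X_i^{\otimes(a+b)}$ whenever $Z$ is a summand of $X_i^{\otimes a}$ and $W$ of $X_i^{\otimes b}$, one checks that $c$ is a grading by $\mathbb Z_{e_i}$, and it is faithful because $X_i^{\otimes n}$ realizes the class $n \bmod e_i$. By universality this grading factors through $\lambda_{\C_i}$, giving a surjection $U(\C_i) \twoheadrightarrow \mathbb Z_{e_i}$, hence $e_i \mid d_i$ and so $e_i = d_i$. By the standard fact about numerical semigroups of $\gcd$ equal to $d_i$, the set $S_{\1}^{(i)}$ then contains every sufficiently large multiple of $d_i$, and therefore each $S_Z^{(i)}$ contains every sufficiently large integer in its residue class modulo $d_i$.

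Finally I synchronize. Given $Z$ and $W$ with residues $r_Z$ modulo $d_1$ and $r_W$ modulo $d_2$, the hypothesis $\gcd(d_1, d_2) = 1$ lets me solve $n \equiv r_Z \pmod{d_1}$ and $n \equiv r_W \pmod{d_2}$ by the Chinese Remainder Theorem; the solutions form a full residue class modulo $d_1 d_2$, so I may take $n$ beyond both thresholds of the previous step. For such $n$ both $m(Z, X_1^{\otimes n}) > 0$ and $m(W, X_2^{\otimes n}) > 0$, whence $m(Z \boxtimes W,\, (X_1 \boxtimes X_2)^{\otimes n}) > 0$. As $Z \boxtimes W$ exhausts $\Irr(\C_1 \boxtimes \C_2)$, this shows $X_1 \boxtimes X_2$ is faithful. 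I expect the middle step, the identification $e_i = |U(\C_i)|$, to be the main obstacle: the semigroup and CRT bookkeeping is routine, but matching the period of the fusion matrix of $X_i$ with its universal grading group is what makes the coprimality hypothesis bite, and it is the only place where the universal property of the grading is genuinely invoked.
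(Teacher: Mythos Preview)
Your proof is correct and takes a genuinely different route from the paper's. The paper observes that $U(\C) = U(\C_1) \times U(\C_2)$ is cyclic (by Theorem~\ref{G-cyclic} and coprimality), generated by $a = (a_1, a_2)$ with $X_i \in (\C_i)_{a_i}$; then $(X_1 \boxtimes X_2)^{\otimes m}$ is a nonzero object of $\C_{a^m}$, so $\D = \C[X_1 \boxtimes X_2]$ meets every homogeneous component $\C_g$, and Theorem~\ref{braided-nilp} (a fusion subcategory of a nilpotent category with commutative Grothendieck ring that meets every $\C_g$ equals $\C$) finishes the argument. That theorem is precisely where nilpotency and commutativity are used. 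Your approach instead identifies the period $e_i$ of the fusion matrix of $X_i$ with $|U(\C_i)|$ via the universal property of the grading, and then synchronizes by the Chinese Remainder Theorem; this bypasses Theorem~\ref{braided-nilp} entirely and in fact needs neither nilpotency nor commutativity. Your parenthetical about commutativity is unnecessary: faithfulness of $X_i$ is equivalent to irreducibility of its left-multiplication matrix in any fusion category, and the verification that $c$ is a grading only uses $X_i^{\otimes a} \otimes X_i^{\otimes b} \cong X_i^{\otimes(a+b)}$, which is associativity. The paper's proof is shorter given that Theorem~\ref{braided-nilp} is already in hand, but yours is self-contained and actually establishes the lemma under weaker hypotheses.
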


\begin{proof} Let $\C = \C_1 \boxtimes \C_2$ and put $U_i = U(\C_i)$ and $U = U(\C)$. Note that
$\C_{\ad} = (\C_1)_{\ad} \boxtimes (\C_2)_{\ad}$ and $U = U_1
\times U_2$. In particular, $\C$ is also nilpotent.

By Theorem \ref{G-cyclic} the groups $U_1$ and $U_2$ are cyclic.
Since $|U_1|$ and $|U_2|$ are relatively prime, then $U$ is also
cyclic. Moreover, suppose that $X_i \in (\C_i)_{a_i}$, $i = 1, 2$.
Then $\langle a_i \rangle = U_i$ and thus $U = \langle a \rangle$,
where $a = (a_1, a_2)$.

Let $g \in U$, $g = a^m = (a_1^m, a_2^m)$, $m \geq 1$. Then $(X_1
\boxtimes X_2)^{\otimes m} = X_1^{\otimes m} \boxtimes
X_2^{\otimes m}$ is a nonzero object in ${(\C_1)}_{a_1^m}
\boxtimes {(\C_2)}_{a_2^m} = (\C_1 \boxtimes \C_2)_{(a_1^m,
a_2^m)} = \C_g$.

\medbreak Denote by $\D = \C[X_1 \boxtimes X_2]$ the fusion
subcategory of $\C$ generated by $X_1 \boxtimes X_2$. We have
shown that $\D \cap \C_g \neq 0$, for all $g \in U(\C)$. Since
$\C$ is also a nilpotent fusion category with commutative
Grothendieck ring, then $\D = \C$, by Theorem \ref{braided-nilp}.
\end{proof}

\begin{theorem}\label{braided-primepower} Let $\C$ be a
braided nilpotent fusion category such that the group $U(\C)$ is cyclic.
Then $\C$ has a faithful simple object. \end{theorem}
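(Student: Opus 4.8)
The plan is to exhibit a faithful simple object directly, using Theorem \ref{braided-nilp} as the main engine. Since $\C$ is braided, the braiding isomorphisms $X \otimes Y \simeq Y \otimes X$ show that the Grothendieck ring of $\C$ is commutative, so Theorem \ref{braided-nilp} is available: any fusion subcategory $\D \subseteq \C$ with $\D \cap \C_g \neq 0$ for every $g \in U(\C)$ must equal $\C$. Thus it suffices to produce a single simple object $X$ whose generated fusion subcategory $\C[X]$ meets every homogeneous component of the universal grading.

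To find such an $X$, I would use the hypothesis that $U(\C)$ is cyclic. Fix a generator $a$ of $U(\C)$. Since the universal grading $\C = \oplus_{g \in U(\C)}\C_g$ is faithful, the component $\C_a$ is nonzero, and by semisimplicity it contains a simple object $X$. For every $n \geq 1$ the tensor power $X^{\otimes n}$ is a nonzero object lying in $\C_{a^n}$, and each of its simple summands belongs to $\C[X] \cap \C_{a^n}$; together with $\1 \in \C[X] \cap \C_e$ and the duals $(X^*)^{\otimes n} \in \C[X] \cap \C_{a^{-n}}$, this gives $\C[X] \cap \C_{a^k} \neq 0$ for all $k$. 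Because $a$ generates $U(\C)$, every $g \in U(\C)$ is of the form $a^k$, so $\C[X] \cap \C_g \neq 0$ for all $g$; that is, $U_{\C[X]}(\C) = U(\C)$.

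Applying Theorem \ref{braided-nilp} to $\D = \C[X]$ then yields $\C[X] = \C$, so $X$ is the desired faithful simple object. In this formulation the decisive work is entirely absorbed into Theorem \ref{braided-nilp}; the remaining steps are the elementary observations that a generator's component is nonempty and that its tensor powers (together with their duals) sweep out the whole cyclic group. An alternative route, closer to the classical group-theoretic argument that a nilpotent group with cyclic center admits a faithful irreducible character, would first decompose $\C \simeq \C_1 \boxtimes \cdots \boxtimes \C_r$ as a tensor product of braided categories of prime-power Frobenius--Perron dimension $p_i^{n_i}$. Since $U(\C) = \prod_i U(\C_i)$ with each $U(\C_i)$ a $p_i$-group for distinct primes $p_i$, cyclicity of $U(\C)$ forces each $U(\C_i)$ to be cyclic; one would then produce a faithful simple object in each prime-power factor (again by the argument above) and combine them via Lemma \ref{prod-faithful}, whose coprimality hypothesis is exactly what the distinct primes guarantee. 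The main obstacle in either approach is really Theorem \ref{braided-nilp}, which plays the role of the normal-subgroup/center intersection lemma from group theory; granting it, the rest is short.
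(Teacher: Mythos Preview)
Your direct argument is correct and is in fact cleaner than the paper's own proof. The paper proceeds exactly along what you call the ``alternative route'': it first treats the prime-power case $\FPdim\C=p^n$ by a maximal-subgroup argument (if every $\C[X]$ were proper, Theorem \ref{braided-nilp} would force each $\phi_X(U(X))$ into the unique index-$p$ subgroup $T\subseteq U(\C)$, contradicting Lemma \ref{udeC}), and then invokes the decomposition $\C\simeq\C_{p_1}\boxtimes\cdots\boxtimes\C_{p_r}$ of \cite[Theorem 1.1]{DGNO} together with Lemma \ref{prod-faithful} to reduce the general case to the prime-power one.

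Your approach bypasses all of this: choosing any simple $X$ in the component $\C_a$ of a generator $a\in U(\C)$ immediately gives $\C[X]\cap\C_{a^n}\neq 0$ for all $n$, and one application of Theorem \ref{braided-nilp} finishes. This avoids the external input from \cite{DGNO}, does not require Lemma \ref{udeC} or Lemma \ref{prod-faithful}, and works uniformly without the prime-power reduction. The paper's route, on the other hand, mirrors the classical proof for nilpotent groups more closely and makes the Sylow-type structure visible; but as a proof of the theorem itself, your argument is strictly shorter and uses less machinery. Interestingly, the core idea you use---that tensor powers of an object in a generating component sweep out all components---is exactly what the paper exploits inside the proof of Lemma \ref{prod-faithful}; it simply does not apply it directly to Theorem \ref{braided-primepower}.
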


\begin{proof} Suppose first that $\FPdim \C = p^n$, where $p$ is a prime number, $n \geq 0$.
By assumption, $U(\C)$ has a unique subgroup $T$ of index $p$. Let
$X \in \Irr (\C)$ and suppose that $\C[X] \subsetneq \C$. It
follows from Theorem \ref{braided-nilp} that $\phi_X(U(X))
\subsetneq U(\C)$ and therefore  $\phi_X(U(X)) \subseteq T$. By
Lemma \ref{udeC},  $U(\C) = \cup_{X \in \Irr (\C)}\phi_X(U(X))$.
Therefore there must exist some $X \in \Irr (\C)$ with $\C[X] =
\C$. Then the theorem holds in this case.

\medbreak By \cite[Theorem 1.1]{DGNO} a braided nilpotent fusion
category has a unique decomposition into a tensor product of
braided fusion categories whose Frobenius-Perron dimensions are
powers of distinct primes. That is, there exist prime numbers
$p_1, \dots, p_r$, $p_i \neq p_j$ for all $i \neq j$, and an
equivalence of braided fusion categories $\C \simeq \C_{p_1}
\boxtimes \dots \boxtimes \C_{p_r}$, where for all $i = 1, \dots,
r$, $\C_{p_i}$ is a braided fusion category of Frobenius-Perron
dimension $p_i^{n_i}$, for some $n_i \geq 0$.

\medbreak We have an isomorphism $U(\C) \simeq U(\C_{p_1}) \times
\dots \times U(\C_{p_r})$. Therefore the groups $U(\C_{p_i})$ are
also cyclic. As we have already shown, this implies that each
$\C_{p_i}$ has a faithful simple object $X_i$, $i = 1, \dots, r$.
Since the orders of the groups $U(\C_{p_i})$ are relatively prime,
Lemma \ref{prod-faithful} implies that $X_1 \boxtimes \dots
\boxtimes X_r$ is a faithful simple object of $\C$. This finishes
the proof of the theorem.
\end{proof}


When $\C$ is a braided fusion category, the group $U(\C)$ is abelian. We have in this case the following refinement of Corollary \ref{exp-udec}:

\begin{proposition}\label{gen-sd} Let $\C$ be a braided fusion category. Suppose $\C$ is generated by simple objects $X_1, \dots, X_n$.  Then the exponent of $U(\C)$
divides $\textrm{l.c.m.}\{ \ord X_i|\, 1 \leq i \leq n \}$. In particular, if $\C$ is generated by
self-dual simple objects, then $U(\C)$ is an elementary abelian $2$-group. \end{proposition}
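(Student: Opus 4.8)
The plan is to combine the two parts of Theorem \ref{G-cyclic} with the fact, recalled just above, that $U(\C)$ is abelian whenever $\C$ is braided. First I would choose, for each $i = 1, \dots, n$, the element $g_i \in U(\C)$ determined by $X_i \in \C_{g_i}$ (this is well defined since the universal grading is a direct sum decomposition, so each simple object lies in a unique homogeneous component). By Theorem \ref{G-cyclic}(i), the order of $g_i$ divides $\ord X_i$, and since $\C$ is generated by $X_1, \dots, X_n$, Theorem \ref{G-cyclic}(ii) guarantees that $g_1, \dots, g_n$ generate the group $U(\C)$.

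Now set $N = \mathrm{l.c.m.}\{ \ord X_i \mid 1 \leq i \leq n \}$. Since each $\ord g_i$ divides $\ord X_i$, which in turn divides $N$, we have $g_i^N = e$ for all $i$. The crucial point is that $U(\C)$ is abelian because $\C$ is braided. Therefore any $g \in U(\C)$ can be written as a product $g = g_1^{a_1} \cdots g_n^{a_n}$ for suitable integers $a_i$, and using commutativity we obtain $g^N = g_1^{a_1 N} \cdots g_n^{a_n N} = e$. Hence every element of $U(\C)$ has order dividing $N$, that is, the exponent of $U(\C)$ divides $N$, as claimed.

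For the final assertion, if every $X_i$ is self-dual then $\ord X_i \leq 2$ (indeed $\ord X_i = 1$ if $X_i = \1$ and $\ord X_i = 2$ otherwise), so $N$ divides $2$. Thus $U(\C)$ is an abelian group of exponent dividing $2$, i.e.\ an elementary abelian $2$-group.

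The only real subtlety — and precisely the reason braidedness enters — is the passage from ``the generators $g_i$ have order dividing $N$'' to ``every element of $U(\C)$ has order dividing $N$.'' This implication is false in a general nonabelian group, where the exponent may strictly exceed the least common multiple of the orders of a generating set; commutativity of $U(\C)$ is exactly what makes the argument go through, and it is what upgrades Corollary \ref{exp-udec}(i) (which bounds the exponent using \emph{all} simple objects) to a statement using only a generating family.
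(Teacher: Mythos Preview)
Your proof is correct and follows essentially the same approach as the paper: choose $g_i \in U(\C)$ with $X_i \in \C_{g_i}$, invoke Theorem \ref{G-cyclic} to see that the $g_i$ generate $U(\C)$ and have orders dividing $\ord X_i$, and then use the abelianness of $U(\C)$ (coming from the braiding) to conclude. The paper's proof is simply a terser version of exactly this argument.
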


\begin{proof} Let $g_i \in U(\C)$ such that $X_i \in \C_{g_i}$.
Then $U(\C)$ is generated by $g_1, \dots, g_n$, and by Theorem
\ref{G-cyclic},  the order of $g_i$ divides $\ord X_i$, for all
$i= 1, \dots, n$. This implies the proposition, since $U(\C)$ is
abelian. \end{proof}

\medbreak When $\C$ is a \emph{modular} category, there is a group
isomorphism $G(\C) \simeq U(\C)$ \cite{gel-nik}. Theorems \ref{G-cyclic} and \ref{braided-primepower} imply the following:

\begin{corollary}\label{gdec} Let $\C$ be a modular category. Then the following hold:
\begin{enumerate}\item[(i)] Suppose $\C$ has a faithful simple object. Then the group $G(\C)$ is cyclic and its order divides the order of $X$. In particular, if $\C$ has a faithful self-dual simple object, then $G(\C) = 1$ or $G(\C) \simeq \mathbb Z_2$.
\item[(ii)] Suppose $\C$ is nilpotent. If $G(\C)$ is cyclic, then $\C$ has a faithful simple object. \qed
\end{enumerate} \end{corollary}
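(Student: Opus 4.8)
The plan is to deduce both statements directly from the isomorphism $G(\C) \simeq U(\C)$ for modular categories (recalled just before the corollary, from \cite{gel-nik}) together with Theorems \ref{G-cyclic} and \ref{braided-primepower}. Since a modular category is in particular braided and its Grothendieck ring is commutative, both of those theorems apply, and the entire content of the corollary is a translation of statements about $U(\C)$ into statements about $G(\C)$ via this isomorphism.

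For part (i), I would argue as follows. Suppose $X$ is a faithful simple object of $\C$. By the last assertion of Theorem \ref{G-cyclic}, the group $U(\C)$ is cyclic and its order divides $\ord(X)$. Transporting this along the isomorphism $G(\C) \simeq U(\C)$, the group $G(\C)$ is cyclic of order dividing $\ord(X)$, which is the first claim. For the refinement, if in addition $X$ is self-dual, then $X \simeq X^*$, so $m(\1, X^{\otimes 2}) = m(X, X) > 0$ and hence $\ord(X) \leq 2$; therefore $G(\C)$ is cyclic of order dividing $2$, i.e.\ $G(\C) = 1$ or $G(\C) \simeq \mathbb Z_2$.

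For part (ii), assume $\C$ is nilpotent and $G(\C)$ is cyclic. Again using $G(\C) \simeq U(\C)$, the group $U(\C)$ is cyclic. Since $\C$ is braided and nilpotent, Theorem \ref{braided-primepower} applies and yields a faithful simple object of $\C$, as desired.

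I do not anticipate any serious obstacle here: the corollary is a formal consequence of the preceding results once the identification $G(\C) \simeq U(\C)$ is in hand, so the only point requiring care is to check the hypotheses of Theorems \ref{G-cyclic} and \ref{braided-primepower} are met (modular $\Rightarrow$ braided with commutative Grothendieck ring), which is immediate. The one mild notational issue is that the object $X$ in part (i) is implicitly the given faithful simple object, so I would name it explicitly at the start of the argument to keep the dependence of the divisibility statement on $\ord(X)$ unambiguous.
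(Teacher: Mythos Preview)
Your proposal is correct and matches the paper's approach exactly: the corollary is stated with a \qed and no proof, having been introduced as an immediate consequence of Theorems \ref{G-cyclic} and \ref{braided-primepower} together with the isomorphism $G(\C)\simeq U(\C)$ for modular categories. Your written-out argument is precisely the intended deduction.
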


\section{Generalized Tambara-Yamagami fusion rules}\label{generalized-ty}

Let us consider a fusion category $\C$ such that $\C$ is not
pointed and for all non-invertible simple objects $X, Y$ of $\C$,
their tensor product $X \otimes Y$ is a direct sum of invertible
objects. We shall say in this case that $\C$ has \emph{generalized
Tambara-Yamagami fusion rules}. These categories are classified,
up to equivalence of tensor categories, in \cite{liptrap}.
Semisimple Hopf algebras $H$ such that the category $\rep H$ has
generalized Tambara-Yamagami fusion rules were studied in
\cite{ext-ty}.

Let $G$ be the group of invertible objets of $\C$. Then for all $X
\in \Irr (\C)$ we have the relation $X \otimes X^* \simeq
\oplus_{h \in G[X]} h$. In particular, $\C$ is nilpotent of
nilpotency class $2$.

\begin{lemma}\label{type}
\begin{enumerate}\item[(i)] The action of the group $G$ by left (or right) tensor multiplication on the
set $\Irr (\C)- G$ is transitive.
\item[(ii)] There exists a normal subgroup $\Gamma$ of $G$ such that
$G[X] = \Gamma$, for all non-invertible simple object $X$ of $\C$.
In particular $\cd(\C) = \{ 1, \sqrt{|\Gamma|} \}$.
\item[(iii)] $\Irr (\C) = G \cup \{ X_s| \; s
\in G/\Gamma\}$, where $X_{\overline g} = g \otimes X$, $g \in G$,
obeying
\begin{equation}\label{gen-ty}g \otimes h \simeq gh, \quad X_{\overline g}
\otimes X_{\overline h}^* \simeq \oplus_{a \in \Gamma} gah^{-1},
\quad \forall g, h \in G. \end{equation}
\end{enumerate}
\end{lemma}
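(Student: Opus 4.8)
The plan is to analyze the structure of $\Irr(\C)$ by combining the generalized Tambara-Yamagami hypothesis with the action of $G = G(\C)$ on the non-invertible simple objects, and the fact (recorded just before the lemma) that $X \otimes X^* \simeq \oplus_{h \in G[X]} h$ for every simple $X$, where $G[X]$ is the stabilizer of $X$ under the left $G$-action. For part (i), I would first show that $\C_{\ad}$ is pointed: since $X \otimes X^*$ is a sum of invertibles for every simple $X$ (non-invertible by hypothesis, invertible trivially), the adjoint subcategory $\C_{\ad} = \C[\{X \otimes X^*\}]$ is generated by invertible objects, hence $\C_{\ad} = \C_{pt}$ and equals $\C[G]$; this gives nilpotency of class $\leq 2$, and since $\C$ is not pointed we get class exactly $2$. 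Transitivity of the $G$-action on $\Irr(\C) - G$ should then follow from a Frobenius-Perron dimension count together with the $G$-grading structure: the orbits of $G$ on $\Irr(\C) - G$ partition this set, and I would argue that there can be only one orbit because, for non-invertible simple $X, Y$, the tensor product $X \otimes Y^*$ is a nonzero sum of invertibles, which forces $Y$ to lie in the $G$-orbit of $X$ (concretely, $m(h, X \otimes Y^*) > 0$ for some invertible $h$ means $m(X, h \otimes Y) > 0$, so $X \simeq h \otimes Y$ by simplicity).

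For part (ii), the point is that all non-invertible simple objects have isomorphic stabilizers. Since the $G$-action is transitive by (i), for any two non-invertible simples $X$ and $Y = g \otimes X$ we have $G[Y] = g\, G[X]\, g^{-1}$, so the stabilizers form a single conjugacy class of subgroups. To upgrade this to a single \emph{normal} subgroup $\Gamma$ with $G[X] = \Gamma$ for all non-invertible $X$, I would use commutativity: the relation $X \otimes X^* \simeq \oplus_{h \in G[X]} h$ shows $G[X]$ is precisely the set of invertibles appearing in $X \otimes X^*$, and since $g \otimes X \otimes X^* \otimes g^{-1} \simeq X \otimes X^*$ always holds (the left and right $G$-actions on $\Irr(\C)$ agree on the fixed-point data here), one gets $g\,G[X]\,g^{-1} = G[X]$ for all $g \in G$. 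Hence $\Gamma := G[X]$ is normal and independent of the choice of non-invertible $X$. The statement $\cd(\C) = \{1, \sqrt{|\Gamma|}\}$ then follows because $\FPdim(X)^2 = \FPdim(X \otimes X^*) = |G[X]| = |\Gamma|$ for every non-invertible simple $X$, while invertibles have dimension $1$.

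For part (iii), I would parametrize the non-invertible simples by cosets $G/\Gamma$. Fixing a non-invertible simple object $X$, transitivity from (i) gives that every non-invertible simple is of the form $g \otimes X$ for some $g \in G$, and by definition of the stabilizer, $g \otimes X \simeq g' \otimes X$ if and only if $g^{-1}g' \in \Gamma$; thus $X_s := g \otimes X$ for $s = \overline{g} \in G/\Gamma$ is well-defined and exhausts $\Irr(\C) - G$. The fusion rule in \eqref{gen-ty} then comes from computing $X_{\overline g} \otimes X_{\overline h}^* = (g \otimes X) \otimes (h \otimes X)^* \simeq g \otimes (X \otimes X^*) \otimes h^{-1} \simeq \oplus_{a \in \Gamma} g a h^{-1}$, using the normality of $\Gamma$ to move the $G$-factors past the decomposition. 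The main obstacle I anticipate is justifying that the left and right stabilizers coincide and that $\Gamma$ is genuinely normal and orbit-independent, i.e.\ pinning down the interaction between the left $G$-action and the duality/conjugation structure carefully enough that the conjugation argument in (ii) is airtight; once that symmetry is established, parts (i) and (iii) are largely bookkeeping with Frobenius-Perron dimensions and the multiplicity relations.
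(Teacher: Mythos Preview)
Your arguments for (i) and (iii) are essentially the paper's: the key step in (i) is exactly that $m(h, X \otimes Y^*) > 0$ forces $X \simeq h \otimes Y$, and (iii) is bookkeeping once (i) and (ii) are in place. (One small slip: from ``$\C_{\ad}$ is generated by invertibles'' you only get $\C_{\ad} \subseteq \C_{pt}$, not equality; indeed the paper later shows $\C_{\ad} = \C[\Gamma]$, which is typically a proper subcategory of $\C[G]$. But this does not affect the lemma.)

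The gap is in (ii), and you have correctly located it yourself. Your claim that $g \otimes X \otimes X^* \otimes g^{-1} \simeq X \otimes X^*$ ``always holds'' is precisely the statement $g\,G[X]\,g^{-1} = G[X]$ that you are trying to prove, so the argument is circular; and the appeal to ``commutativity'' is not available, since nothing in the hypotheses forces $G$ to be abelian. The paper resolves this not by comparing left and right stabilizers directly, but by mixing the two actions. The trivial identity $G[X \otimes h] = G[X]$ (tensor on the right by $h^{-1}$) together with transitivity of the \emph{right} action from (i) shows that every non-invertible simple has the same left stabilizer $\Gamma := G[X]$. Normality is then automatic: for any $h \in G$ the object $h \otimes X$ is again non-invertible, so $\Gamma = G[h \otimes X] = h\,G[X]\,h^{-1} = h\Gamma h^{-1}$. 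This sidesteps entirely the left/right stabilizer comparison you were worried about.
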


\begin{proof} Let $X, Y \in \Irr (\C) - G$. By
assumption, $X \otimes Y^* = \oplus_{h \in S}h$ for some subset $S
 \subseteq G$. Then there exists $h \in G$ such that $m(h, X \otimes Y^*) >
 0$. Hence $m(X, h \otimes Y) = m(h, X \otimes Y^*) = 1$ and thus $h \otimes Y = X$.
 This shows that the left action of $G$ is transitive. The statement for the right
 action is proved similarly. This shows (i).

Part (ii) follows from transitivity of the right action, since
$G[X \otimes h] = G[X]$, for all simple object $X$, and for all $h
\in G$. Note that, if $h \in G$, $\Gamma = G[h \otimes X] =
hG[X]h^{-1} = h\Gamma h^{-1}$. Hence $\Gamma$ is normal in $G$.

Finally, let $X$ be a fixed non-invertible simple object and set
$X_{\overline g} = g \otimes X$,  for every $\overline g \in
G/\Gamma$. The isomorphism class of $X_{\overline g}$ is well
defined, since $\Gamma = G[X]$. This also implies that
$X_{\overline g} \simeq X_{\overline h}$ if and only if $\overline
g = \overline h$ in $G/\Gamma$. It is clear that the relations
\eqref{gen-ty} are satisfied. By (i), every non-invertible $Y \in
\Irr (\C)$ is isomorphic to $X_{\overline g}$, for some $g \in G$,
and thus we get (iii).
\end{proof}

\begin{remark}\label{rk-index} Let $G$ and $\Gamma$ be the groups associated to $\C$ as in
Lemma \ref{type}. We shall say that $\C$ has generalized
Tambara-Yamagami fusion rules of \emph{type} $(G, \Gamma)$. In
this case, the lemma implies that the rank of $\C$ is
$[G:\Gamma](1+|\Gamma|)$ and $\FPdim \C = 2|G|$.

In addition, if the index of $\Gamma$ in $G$ is odd, then $\C$ has
a non-invertible simple object of order $2$. \end{remark}

Tambara-Yamagami categories and Moore-Read categories are examples
of fusion categories with generalized Tambara-Yamagami fusion
rules of types $(G, G)$, where $G$ is a finite abelian group, and
$(\mathbb Z_4, \mathbb Z_2)$, respectively.

\begin{proposition}\label{grading} Suppose $\C$ has generalized Tambara-Yamagami fusion
rules of type $(G, \Gamma)$.  Then we have
\begin{enumerate}\item[(i)] The adjoint subcategory $\C_{\ad}$ coincides with $\C[\Gamma]$ and it
is equivalent to the category of $\Gamma$-graded vector spaces.
\item[(ii)] The group $U(\C)$ is of order $2[G:\Gamma]$.
\item[(iii)] The universal grading $\lambda: \Irr(\C) \to U(\C)$
induces an isomorphism $G/\Gamma \simeq \lambda(G)$, such that
$[U(\C): \lambda(G)] = 2$.
\item[(iv)] Let $g \in U(\C)$. Then the rank of $\C_g$ is $2$ if
and only if $g \in \lambda(G)$, and in this case $\C_g = (\tilde
g\Gamma)$, where $\lambda(\tilde g) = g$. Otherwise, $\C_g$ is of
rank $1$.
\end{enumerate}
\end{proposition}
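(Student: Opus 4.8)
The plan is to read off the universal grading directly from the explicit fusion rules \eqref{gen-ty}, treating the four assertions in order. For part (i) I would compute $Y\otimes Y^*$ for every simple object $Y$: if $Y=h$ is invertible then $Y\otimes Y^*=\1$, and if $Y=X_{\overline g}$ is non-invertible then, setting $h=g$ in \eqref{gen-ty} and using that $\Gamma$ is normal, $X_{\overline g}\otimes X_{\overline g}^*\simeq\oplus_{a\in\Gamma}gag^{-1}\simeq\oplus_{a\in\Gamma}a$. Hence the simple constituents of the objects $Y\otimes Y^*$ are exactly the elements of $\Gamma$, so $\C_{\ad}=\C[\Gamma]$. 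Since $\Gamma$ consists of invertible objects, $\C[\Gamma]$ is pointed with group of invertibles $\Gamma$, hence equivalent to the category of $\Gamma$-graded vector spaces; in particular $\FPdim\C_{\ad}=|\Gamma|$.

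For part (ii) I would use that $\C$ is a $U(\C)$-extension of $\C_e=\C_{\ad}$, so $\FPdim\C=|U(\C)|\,\FPdim\C_{\ad}$ \cite[Proposition 8.20]{ENO}; together with $\FPdim\C=2|G|$ (Remark \ref{rk-index}) and $\FPdim\C_{\ad}=|\Gamma|$ this yields $|U(\C)|=2|G|/|\Gamma|=2[G:\Gamma]$. For part (iii), since $g\otimes h\simeq gh$ the restriction $\lambda|_G\colon G\to U(\C)$ is a group homomorphism, and its kernel is $\{h\in G: h\in\C_{\ad}\}=\Gamma$, because the invertible objects of the pointed category $\C_{\ad}=\C[\Gamma]$ are precisely the elements of $\Gamma$. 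Thus $\lambda|_G$ factors through an injection $G/\Gamma\hookrightarrow U(\C)$ with image $\lambda(G)$, giving the isomorphism $G/\Gamma\simeq\lambda(G)$; and $[U(\C):\lambda(G)]=|U(\C)|/[G:\Gamma]=2$ by part (ii).

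For part (iv) I would then locate each simple object in its homogeneous component. Writing $c=\lambda(X_{\overline e})$ and using $X_{\overline g}\simeq g\otimes X_{\overline e}$, the grading gives $\lambda(X_{\overline g})=\lambda(g)\,c$; as $\overline g$ runs over $G/\Gamma$ this runs bijectively over the coset $\lambda(G)c$, so the non-invertible objects lie exactly in the components indexed by $\lambda(G)c$, one object per component. The crucial step, which I expect to be the main obstacle, is to show that this coset is nontrivial, i.e.\ $c\notin\lambda(G)$: this follows from the faithfulness of the universal grading, since $U(\C)=\lambda(\Irr(\C))=\lambda(G)\cup\lambda(G)c$ has order $2[G:\Gamma]=2|\lambda(G)|$, which forces the two cosets to be disjoint. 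Granting this, for $g\in\lambda(G)$ the component $\C_g$ contains no non-invertible object and consists precisely of the invertibles $\{h\in G:\lambda(h)=g\}=\tilde g\Gamma$, so $\C_g=(\tilde g\Gamma)$ is of rank $|\Gamma|$; whereas for $g\notin\lambda(G)$ the component $\C_g$ contains a single non-invertible object and no invertibles, hence has rank $1$. Apart from the coset/faithfulness argument, every step is a direct reading of \eqref{gen-ty} together with the Frobenius--Perron dimension count.
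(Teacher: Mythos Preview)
Your arguments for parts (ii), (iii), and (iv) follow essentially the same line as the paper, with your treatment of (iv) being a more explicit unpacking of what the paper compresses into one sentence: the paper simply notes that $\FPdim\C_g=|\Gamma|$ for every $g\in U(\C)$ and invokes (iii) together with Lemma~\ref{type}(ii), whereas you trace each simple object to its homogeneous component directly. Your coset argument that $c\notin\lambda(G)$ is correct and amounts to the same cardinality count the paper is implicitly using.

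There is, however, a genuine gap in your proof of (i). You write that since $\C[\Gamma]$ is pointed with group of invertibles $\Gamma$, it is ``hence equivalent to the category of $\Gamma$-graded vector spaces.'' That inference is not valid as stated: a pointed fusion category with underlying group $\Gamma$ is only equivalent to $\vect_\Gamma^{\omega}$ for some $3$-cocycle $\omega\in Z^3(\Gamma,k^\times)$, and nothing in your argument shows that the associator class is trivial. The paper supplies the missing idea: any non-invertible simple object $X$ satisfies $g\otimes X\simeq X$ for all $g\in\Gamma=G[X]$, so left tensor multiplication by $X$ furnishes a fiber functor on $\C[\Gamma]$; the existence of a fiber functor forces $\omega$ to be cohomologically trivial, hence $\C[\Gamma]\simeq\vect_\Gamma$. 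Without this step you have correctly shown $\C_{\ad}=\C[\Gamma]$ and that $\C[\Gamma]$ is pointed (which is all that is needed for the dimension count $\FPdim\C_{\ad}=|\Gamma|$ feeding into (ii)--(iv)), but not the full assertion of (i).
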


\begin{proof} (i) By Lemma \ref{type} (ii), $X \otimes X^* \simeq \oplus_{h \in \Gamma}
h$, for all non-invertible simple object $X$ of $\C$. Therefore
$\C_{\ad} = \C[\Gamma]$. On the other hand, if $X$ is a
non-invertible simple object, then $X$ gives rise (via left tensor
multiplication) to a fiber functor on $\C[\Gamma]$. Then
$\C[\Gamma]$ is equivalent to the category of $\Gamma$-graded
vector spaces. This shows (i).

(ii) Since $\FPdim \C = |\Gamma| |U(\C)| = 2|G|$, then $|U(\C)| =
2[G:\Gamma]$, as claimed.

(iii) It is clear that $\lambda$ induces a group homomorphism
$\lambda: G \to U(\C)$. Since $\C_{\ad} = \C[\Gamma]$, then $\ker
\lambda = \Gamma$ and  $G/\Gamma \simeq \lambda(G)$. The last
assertion follows from (ii).

(iv) We have $\FPdim \C_g = |\Gamma|$, for all $g \in U(\C)$. This
implies (iv), in view of (iii) and Lemma \ref{type} (ii).
%
%
\end{proof}

%

Recall that a braided fusion category $\mathcal T$ is called \emph{tannakian} if there exists an equivalence of braided tensor categories $\mathcal T \simeq  \rep G$, where $G$ is a finite group.

\begin{lemma}\label{genty-order2} Let $\C$ be a modular category  with generalized Tambara-Yamagami fusion rules. Then we have
\begin{enumerate}\item[(i)] $\Gamma \simeq \mathbb Z_2$ and $\C[\Gamma]$ is not tannakian.

\item[(ii)] $\C$ has a non-invertible simple object of order $2$.
\end{enumerate}
\end{lemma}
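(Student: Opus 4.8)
The plan is to use the structural results on modular categories with generalized Tambara-Yamagami fusion rules established so far, combined with the characterization of the universal grading from Proposition \ref{grading}, to pin down the order of $\Gamma$. First I would recall that for a modular category $\C$ one has $G(\C) \simeq U(\C)$, and that $\C$ is nilpotent of class $2$ with $\C_{\ad} = \C[\Gamma]$ equivalent to $\vect_\Gamma$. Since a modular category has trivial M\"uger center, I would examine the symmetric subcategory structure of $\C[\Gamma]$: as $\C[\Gamma]$ is pointed with group $\Gamma$, its further properties as a fusion subcategory of a modular category are constrained.

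The key step is to show $\Gamma \simeq \mathbb Z_2$. Here I would argue that $\C_{\ad} = \C[\Gamma]$ must be trivial or very small. The natural tool is the relation between $\C_{\ad}$ and the M\"uger center (or the centralizer) in a modular category: in a modular category, $\C_{\ad}$ is the M\"uger centralizer of $\C_{pt}$, and since $G(\C) \simeq U(\C)$ has order $2[G:\Gamma]$ by Proposition \ref{grading}(ii) while $\C_{pt} = \C[G]$ has group $G$ of order $|G|$, comparing dimensions via $\FPdim \C = 2|G| = \FPdim \C_{pt} \cdot \FPdim \C_{\ad}$ forces $\FPdim \C_{\ad} = |\Gamma| = 2$. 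This yields $\Gamma \simeq \mathbb Z_2$ immediately. To see that $\C[\Gamma]$ is not tannakian, I would invoke the fact that a nontrivial tannakian subcategory $\rep \mathbb Z_2$ sitting inside $\C_{\ad}$ would contradict modularity: a tannakian subcategory lies in the M\"uger center of $\C_{\ad}$, and since $\C_{\ad} = \C[\Gamma]$ has FP-dimension exactly $2$, it would itself be symmetric and tannakian, making $\C$ non-modular by the standard modularization obstruction (a modular category has no nontrivial tannakian subcategory contained in its centralizer lattice in this position). Thus $\C[\Gamma]$ must be the non-tannakian symmetric category $\mathrm{sVec}$, or at least not tannakian.

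For part (ii), once $\Gamma \simeq \mathbb Z_2$ is known, I would apply Remark \ref{rk-index}: the index $[G:\Gamma]$ is odd precisely when $|G|/2$ is odd, and the remark guarantees a non-invertible simple object of order $2$ in that case. However, to handle all cases uniformly I would instead argue directly from the fusion rules. Since $\Gamma \simeq \mathbb Z_2$, for any non-invertible simple $X$ we have $X \otimes X^* \simeq \1 \oplus a$ where $\langle a \rangle = \Gamma$. If $X$ is self-dual this already gives $\ord(X) = 2$. Otherwise I would use the constraint that $U(\C) \simeq G(\C)$ together with Proposition \ref{grading}(iv), which identifies the rank-$2$ homogeneous components, to locate a self-dual non-invertible object, exploiting that $\lambda(G)$ has index $2$ in $U(\C)$ so there is a component $\C_g$ with $g \notin \lambda(G)$ of rank $1$ whose simple object must be self-dual (its square lands in $\C_{g^2} \subseteq \C_{\ad}$).

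The hard part will be establishing that $\C[\Gamma]$ is \emph{not} tannakian and that $\Gamma$ cannot be larger than $\mathbb Z_2$; the dimension count gives $|\Gamma| = 2$ cleanly only after correctly identifying $\C_{pt}$ with $\C[G]$ and confirming $\FPdim \C_{\ad} \cdot |G(\C)| = \FPdim \C$, which requires the modular identification $G(\C) \simeq U(\C)$ and Proposition \ref{grading}(ii). The non-tannakian claim is the genuinely categorical input: it rests on the fact that a modular category cannot contain a nontrivial tannakian subcategory in the role of its adjoint subcategory, since that would produce a nontrivial symmetric center after modularization — I expect this to be the main obstacle and would phrase it carefully using the M\"uger-center characterization of modularity.
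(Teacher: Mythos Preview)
Your dimension count for $|\Gamma| = 2$ is correct and matches the paper: modularity gives $|G(\C)| = |U(\C)| = 2[G:\Gamma]$, and since $G(\C) = G$ this forces $|\Gamma| = 2$. The two remaining claims, however, each contain a real gap.

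For the ``not tannakian'' assertion you invoke a ``standard modularization obstruction'' to the effect that a modular category cannot have a nontrivial tannakian subcategory in the position of $\C_{\ad}$. No such general obstruction exists: modular categories routinely contain tannakian subcategories (Drinfeld centers are full of them), and nothing you have said locates $\C[\Gamma]$ inside the M\"uger center of $\C$, which is the only place where tannakianness would immediately contradict modularity. Here $\C[\Gamma]$ is symmetric (it equals the M\"uger center of $\C[G]$, not of $\C$), so it is either $\rep \mathbb Z_2$ or $\mathrm{sVec}$, and ruling out the first option requires an actual argument. The paper argues by contradiction: if $\C[\Gamma] \simeq \rep \mathbb Z_2$, then via $\C \hookrightarrow \Z(\C)$ one realizes $\C$ as a $\mathbb Z_2$-equivariantization $\D^{\mathbb Z_2}$; analyzing the forgetful functor $F:\C \to \D$ and its adjoint, one finds a non-invertible simple $Y \in \D$ with an invertible object occurring in $Y^{\otimes 2}$ with multiplicity $2$, which is impossible. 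Your sketch does not supply anything playing this role.

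For part (ii) your plan is to pick $g \in U(\C)\setminus \lambda(G)$, note that $\C_g$ has rank $1$, and conclude that its unique simple object $X$ is self-dual because ``its square lands in $\C_{g^2} \subseteq \C_{\ad}$''. But $\C_{g^2} \subseteq \C_{\ad}$ means $g^2 = e$, and nothing guarantees an involution outside $\lambda(G)$: a priori $U(\C)$ could be $\mathbb Z_4$ with $\lambda(G)$ its order-$2$ subgroup, in which case every $g \notin \lambda(G)$ has order $4$ and the corresponding $X$ is not self-dual. (That this configuration does not actually occur is a \emph{consequence} of the lemma, not an input to it.) The paper instead uses (i) to see that $\C[G]$ is slightly degenerate, splits $\C[G] \simeq \C[\Gamma] \boxtimes \B$ with $\B$ pointed modular, then factors $\C \simeq \B \boxtimes \B'$ with $\B'$ modular of smaller dimension and again of generalized Tambara--Yamagami type, and finishes by induction (the base case being an Ising category).
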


\begin{proof} (i) Since $\C$ is  modular, then $G \simeq U(\C)$.
From Proposition \ref{grading}, we get that $|\Gamma| = 2$. Hence $\FPdim X = \sqrt 2$, for all non-invertible simple object $X$ of $\C$.
Suppose on the contrary that $\C[\Gamma]$ is tannakian, that is, $\C[\Gamma] \simeq \rep \mathbb Z_2$ as braided
tensor categories. We may regard $\C$ a fusion subcategory of its Drinfeld center $\Z(\C)$. It follows from \cite[Proposition 2.10]{ENO2}, \cite[Theorem 4.18 (i)]{DGNOI}, that $\C$ is
a $\mathbb Z_2$-equivariantization of a (not necessarily braided) fusion category $\D$. In other words, there exist a fusion category $\D$ and an action of $\rho: \mathbb Z_2 \to \Aut_{\otimes }\D$ by tensor autoequivalences, such that $\C \simeq \D^{\mathbb Z_2}$ as fusion categories.

The forgetful functor $F: \D^G \to \D$  is a dominant tensor
functor and it induces an exact sequence of fusion categories
$\C[\Gamma] \to \D^G \to \D$ \cite{tensor-exact}. Hence $\D$ is
not integral, and in particular, it is not pointed. This implies
that $\D$ also has generalized Tambara-Yamagami fusion rules, say,
of type $(\tilde G, \tilde \Gamma)$.

Let $Y$ be a non-invertible simple object of $\D$. Since $F$ is
dominant,  there exists a (non-invertible) simple object $X$ of
$\C$ such that $m(Y, F(X)) > 0$.

Let $L: \D \to \D^G$ denote the left adjoint of $F$. We have
$FL(Y) = \oplus_{t \in \Gamma}\rho^t(Y)$ \cite{tensor-exact}. It
follows by adjunction, that $X$ is a simple direct summand of
$L(Y)$ and therefore $F(X)$ is a direct summand of $FL(Y)$. Hence
$\FPdim X = n \FPdim Y$, for some natural number $n$. By Lemma
\ref{gen-ty}, $\FPdim Y =  \sqrt{|\tilde \Gamma|}$, and thus we
obtain $\sqrt 2 = n \sqrt{|\tilde \Gamma|}$. Hence $n = 1$. It
follows from this that $F(X) = Y$ is a simple object of $\D$.

On the other hand, we have $X^{\otimes 2} \simeq \oplus_{g \in t\Gamma}g$, for some $t \in G$. Then $F(t)$ is an invertible object of $\D$ and $m(F(t), F(X)^{\otimes 2}) = 2$, because $F(g) \simeq \1$, for all $g \in \Gamma$. Thus we reach again a contradiction, because an invertible object can only appear with multiplicity $0$ or $1$ in $Y^{\otimes 2}$. This contradiction shows that $\C$ cannot be a $\mathbb  Z_2$-equivariantization of a fusion category $\D$. Then $\C[\Gamma]$ is not tannakian.

\medbreak (ii)  Let $\C_{pt}'$ denote the M\" uger centralizer of
$\C_{pt} = \C[G]$ in $\C$. By \cite[Corollary 3.27]{DGNOI},
\cite[Corollary 6.8]{gel-nik}, we have $\C[G]' = \C_{\ad} =
\C[\Gamma]$. In particular, $\C[\Gamma] \subseteq \C[G]$ coincides
with the M\" uger center of $\C[G]$. Since, by Part (i),
$\C[\Gamma]$ is not tannakian, then $\C[G]$ is \emph{slightly
degenerate}, and it follows from \cite[Proposition 2.6 (ii)]{ENO2}
that
 $\C[G] \simeq \C[\Gamma]
\boxtimes \B$ as braided tensor categories, where $\B$ is a
pointed modular category. Note that if $\FPdim \B = 1$, then $G =
\Gamma$ and $\FPdim \C = 4$. Therefore $\C$ is an Ising category
and (ii) holds in this case. Hence we may assume that $\FPdim \B =
1$.

On the other hand, the fact that $\B$ is modular implies that $\C
\simeq \B \boxtimes \B'$ as braided tensor categories, where $\B'$
is the M\" uger centralizer of $\B$ in $\C$ \cite[Proposition
4.1]{mueger-structure}. Then $\B'$ is modular and clearly it has
generalized Tambara-Yamagami fusion rules.

Since $\FPdim \B' < \FPdim \C$, we may inductively assume that $\B'$ has a non-invertible simple object of order $2$. Then so does $\C$. This proves (ii) and finishes the proof of the lemma. \end{proof}

The following theorem is the main result of this section. Combined
with the results in \cite[Subsection 2.11 and Appendix B]{DGNOI},
\cite{quinn}, it gives the classification of modular
categories with generalized Tambara-Yamagami fusion rules.

\begin{theorem}\label{modular-genty} Let $\C$ be a modular
category. Then $\C$ has generalized Tambara-Yamagami fusion rules if and only if  $\C \simeq \mathcal I \boxtimes \mathcal B$, where $\mathcal
I$ is an Ising category and $\mathcal B$ is a pointed modular category.
\end{theorem}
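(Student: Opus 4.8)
The plan is to prove both implications, treating the forward direction (a modular category with generalized Tambara-Yamagami fusion rules must decompose as stated) as the substantive one and the backward direction as a short fusion-rule check. For the backward direction, suppose $\C \simeq \mathcal I \boxtimes \mathcal B$ with $\mathcal I$ Ising and $\mathcal B$ pointed modular. I would first note that $\C$ is modular, being a Deligne product of modular categories, and that it is not pointed, since $\mathcal I$ has a non-invertible object $X$ and hence $X \boxtimes \1$ is non-invertible in $\C$. The invertible objects of $\C$ are the $g \boxtimes b$ with $g \in \{\1, a\}$ and $b \in G(\mathcal B)$, and the non-invertible simple objects are exactly the $X \boxtimes b$. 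Computing $(X \boxtimes b) \otimes (X \boxtimes b') \simeq (X \otimes X) \boxtimes bb' \simeq (\1 \boxtimes bb') \oplus (a \boxtimes bb')$ shows that the product of two non-invertible simples is a sum of invertible objects, so $\C$ has generalized Tambara-Yamagami fusion rules.

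For the forward direction I would reuse the structural decompositions already built in the proof of Lemma \ref{genty-order2}. By that lemma $\Gamma \simeq \mathbb{Z}_2$ and $\C[\Gamma] = \C_{\ad}$ is symmetric but not tannakian, hence equivalent to the category $\mathrm{sVec}$ of super vector spaces; consequently $\C[G] = \C_{pt}$ is slightly degenerate with M\"uger center $\C[\Gamma]$, and by \cite[Proposition 2.6 (ii)]{ENO2} it splits as $\C[G] \simeq \C[\Gamma] \boxtimes \B$ with $\B$ a pointed modular category. Since $\B$ is modular, M\"uger's theorem \cite[Proposition 4.1]{mueger-structure} gives $\C \simeq \B \boxtimes \B'$, where $\B'$ is the M\"uger centralizer of $\B$ in $\C$ and is again modular and, as in the lemma, has generalized Tambara-Yamagami fusion rules.

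The key numerical step is then a Frobenius-Perron dimension count. Using $\FPdim \C = 2|G|$, $\FPdim \C[\Gamma] = |\Gamma| = 2$, and $\FPdim \C[G] = |G|$, the splitting $\C[G] \simeq \C[\Gamma] \boxtimes \B$ forces $\FPdim \B = |G|/2$, whence $\FPdim \B' = \FPdim \C / \FPdim \B = 4$. Thus $\B'$ is a modular category of dimension $4$ with generalized Tambara-Yamagami fusion rules; by Remark \ref{rk-index} its group of invertibles has order $2$, and by Lemma \ref{genty-order2}(i) its associated normal subgroup is $\simeq \mathbb{Z}_2$, which pins down its type as $(\mathbb{Z}_2, \mathbb{Z}_2)$, its rank as $3$, and its fusion rules as Ising. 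Since a braided fusion category with Ising fusion rules is by definition an Ising category, $\B'$ is Ising, and setting $\mathcal I = \B'$ and $\mathcal B = \B$ finishes the forward direction. The main obstacle, and where all the real work sits, is establishing the two decompositions of the second paragraph — recognizing $\C[G]$ as slightly degenerate and splitting off its super-vector-space M\"uger center, then applying M\"uger's modular-centralizer splitting — but these are precisely the tools already deployed in Lemma \ref{genty-order2}, so the theorem follows largely by bookkeeping once the dimension count pins $\B'$ down to dimension $4$.
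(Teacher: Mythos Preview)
Your proof is correct, and takes a route that is dual to the paper's. The paper uses part (ii) of Lemma \ref{genty-order2} to locate a non-invertible simple object $X$ of order $2$, observes that $\mathcal I = \C[X]$ is an Ising category (hence modular), and then applies M\"uger's splitting $\C \simeq \mathcal I \boxtimes \mathcal I'$, concluding that $\mathcal I'$ is pointed because $\cd(\mathcal I) = \cd(\C) = \{1, \sqrt 2\}$. You instead split off the pointed modular factor first: from the decompositions inside the proof of Lemma \ref{genty-order2} you extract $\B \subseteq \C_{pt}$ modular, apply M\"uger to get $\C \simeq \B \boxtimes \B'$, and then a Frobenius--Perron count forces $\FPdim \B' = 4$, so $\B'$ is a modular generalized Tambara--Yamagami category of type $(\mathbb Z_2, \mathbb Z_2)$, i.e.\ Ising. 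The paper's route is cleaner in that it cites only the \emph{statement} of Lemma \ref{genty-order2}; yours reaches into the proof of that lemma but, in exchange, bypasses the inductive argument used there to produce the order-$2$ simple, replacing it by a one-line dimension computation. Either way the substantive work is the same pair of structural facts: $\C[\Gamma]$ is super-tannakian (not tannakian) so $\C_{pt}$ is slightly degenerate, and M\"uger's centralizer splitting applies to any modular subcategory.
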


\begin{proof} Suppose that $\C = \mathcal I \boxtimes \mathcal B$, where
$\mathcal I$ is an Ising category and $\mathcal B$ is a pointed
category.  Then every simple object of $\C$ is isomorphic to $Y
\boxtimes g$, where $Y \in \Irr(\mathcal I)$ and $g \in G(\mathcal
B)$. Then $\C$ is not pointed, and the non-invertible simple
objects of $\C$ are represented by $X \boxtimes g$, where $X$ is
the unique non-invertible simple object of $\mathcal I$ and $g \in
G(\mathcal B)$. This implies that $\C$ has generalized
Tambara-Yamagami fusion rules of type $(G(\mathcal B), \mathbb
Z_2)$. If in addition $\mathcal B$ is modular category, then so is
$\C$. This proves the 'if' direction.

\medbreak Conversely, suppose that  $\C$ is  modular and has generalized Tambara-Yamagami fusion rules. By Lemma \ref{genty-order2} we have $|\Gamma| = 2$ and there exists
a non-invertible simple object $X \in \C$ of order $2$. Then the
fusion subcategory $\mathcal I = \C[X]$ is an Ising category, and
it is necessarily modular, by \cite[Corollary B.12]{DGNOI}.

By \cite[Theorem 4.2]{mueger-structure}, there is an equivalence
of ribbon categories $\C \simeq \mathcal I \boxtimes \mathcal B$,
where $\mathcal B = \mathcal I'$ is the M\" uger centralizer of
$\mathcal I$ in $\C$. Furthermore, since $\C$ is modular, then so
is $\mathcal I' = \mathcal B$. Note that $\cd(\mathcal I) = \{ 1,
\sqrt{2}\} = \cd(\C)$. Therefore $\mathcal B$ must be pointed.
This finishes the proof of the theorem. \end{proof}


\section{Modularization of braided Tambara-Yamagami categories}\label{ty-mod}

Along this section, $\C = \mathcal{TY}(G, \tau, \chi)$ will be a
Tambara-Yamagami fusion category, where $G$ is a finite abelian
group, $\tau$ is a square root of the order of $G$ in $k$ and
$\chi: G \times G \to k^{\times}$ is a non-degenerate symmetric
bicharacter on $G$ \cite{TY}.

We assume that $\C$ is braided. All possible structures of braided
category in $\C$ are classified in \cite{Siehler-braided}. In
particular, $G$ is an elementary abelian $2$-group, and there are
two choices of compatible ribbon structures. Let us consider a
fixed choice $\theta \in \Aut(\id_\C)$, so that $\C$ becomes a
premodular category.


\medbreak Let $\C' \subseteq \C$ be M\" uger center of $\C$. In
the terminology of \cite{bruguieres},   $\C'$ is the fusion
subcategory of transparent objects of $\C$.

\begin{lemma}\label{transp-ty} Suppose $\C$ is not symmetric. Then we have $\C' = \C[T]$, where $T$ is
the subgroup of $G$ defined by $T = \{ g \in G|\, \chi(g, g) =
1\}$. Moreover, the category $\C$ is modularizable.
\end{lemma}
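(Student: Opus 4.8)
The plan is to read off the possible transparent objects directly from the fusion rules together with Siehler's explicit description of the braiding, and then to pin down $\C'$ using the fact that it is a fusion subcategory, hence determined by the simple objects it contains. Recall that $\Irr(\C) = G \cup \{X\}$, with $g \otimes X \simeq X$ and $X \otimes X^* \simeq \oplus_{g \in G} g$, and that an object is transparent precisely when its double braiding (monodromy) with every simple object is the identity. From \cite{Siehler-braided} the braiding on the pointed part satisfies $c_{g,g} = \chi(g,g)$; since for an invertible object the ribbon twist equals its self-braiding, $\theta_g = c_{g,g} = \chi(g,g)$, while the monodromy of an invertible $g$ with $X$ is the scalar $\chi(g,g)$. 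Since $G$ is an elementary abelian $2$-group, each $\chi(g,h) = \pm 1$ and hence $\chi(g,h)^2 = 1$; thus any two invertible objects are mutually transparent.

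First I would determine the transparent invertibles. An invertible $g$ is transparent if and only if its monodromy with every simple object is trivial; by the previous paragraph this is automatic against all invertibles and, against $X$, amounts to $\chi(g,g) = 1$. Hence $g$ is transparent exactly when $g \in T$. Note that $g \mapsto \chi(g,g)$ is a group homomorphism $G \to \{\pm 1\}$ (using $\chi(g,h)^2 = 1$), so $T$ is a subgroup of index at most $2$.

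Next I would identify $\C'$ itself. Being a fusion subcategory, $\C'$ is determined by which simple objects it contains, and by the previous step it contains exactly the invertibles lying in $T$. If $X \notin \C'$ then $\C' = \C[T]$, as desired. If instead $X \in \C'$, then $X \otimes X^* \simeq \oplus_{g \in G} g$ forces $\C_{pt} = \C[G] \subseteq \C'$, and together with $X$ this gives $\C' = \C$, i.e. $\C$ is symmetric. Since we are assuming $\C$ is not symmetric, only the first alternative can occur, and $\C' = \C[T]$.

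Finally, modularizability follows from the criterion of Brugui\`eres and M\"uger: a premodular category is modularizable if and only if its M\"uger center is tannakian. Here $\C' = \C[T]$ is pointed and symmetric, and for every $g \in T$ we have $\theta_g = \chi(g,g) = 1$; thus $\C[T]$ is a symmetric pointed category with trivial twist on an elementary abelian $2$-group, hence tannakian ($\C[T] \simeq \Rep \widehat{T}$). Therefore $\C$ is modularizable. The main obstacle is the first paragraph: everything rests on extracting from \cite{Siehler-braided} the precise values of the self-braiding $c_{g,g}$ and of the $g$--$X$ monodromy, since both the identification of the transparent objects with $T$ and the triviality of the twist on $\C'$ hinge on the equality $c_{g,g} = \chi(g,g)$.
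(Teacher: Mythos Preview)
Your proof is correct and follows essentially the same line as the paper's: read off from Siehler's braiding data that the $g$--$X$ monodromy equals $\chi(g,g)$, deduce that the transparent simple objects are exactly the elements of $T$ (with $X\notin\C'$ since otherwise $\C$ would be symmetric), and then use $\theta_g=\chi(g,g)=1$ on $T$ to conclude that $\C'$ is tannakian. The one difference is that the paper avoids your separate check that invertibles are mutually transparent by observing that, since $X$ generates $\C$, an object lies in $\C'$ if and only if it centralizes $X$; so only the $g$--$X$ monodromy actually needs to be computed.
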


\begin{proof} Since $\C$ is not symmetric, and $X$ generates $\C$, then $X \notin \C'$. Hence
$\C' \subseteq \C[G]$.

Observe that an object $Z$ belongs to $\C'$ if and only if $Z$
centralizes $X$. It follows from  \cite[Subsection
3.1]{Siehler-braided} that, after a suitable normalization, the
bradings $\sigma_{g, X}: g \otimes X \to X \otimes g$ and
$\sigma_{X, g}: X \otimes g \to g \otimes X$ correspond, under the
identification $g \otimes X = X = X \otimes g$, to $s(g)\id_X$,
where $s(g) \in k^{\times}$ are such that $s(g)^2 = \chi(g, g)$,
for all $g \in G$. This implies that $\C' = \C[T]$, where $T = \{
g \in G|\, \chi(g, g) = 1\}$, as claimed.

Let $\theta \in \Aut(\id_\C)$ be the ribbon structure of $\C$.
Then $\theta_g = s(g)^2$ \cite[Subsection 3.5]{Siehler-braided},
for all $g \in G$. Hence $\theta_g = 1$ for all $g \in T$ (this
can also be deduced from \cite[Lemma 5.4]{mueger}, since all
simple objects $g$ of $\C'$ are invertible and satisfy $g \otimes
X \simeq X$). This implies that $\C'$ is tannakian and thus $\C$
is modularizable \cite[Th\' eor\` eme 3.1]{bruguieres}.
\end{proof}

\begin{remark} Lemma \ref{transp-ty} implies that if $\C$ is modular
then $\chi(g, g) \neq 1$, for all $1\neq g \in G$. On the other
hand, if $\chi(g, g) = 1$, for all $g \in G$, then either $\C$ is
symmetric or $\tilde \C$ is pointed and $\FPdim \tilde \C = 2$.
\end{remark}

Let $F: \C \to \tilde \C$ denote the modularization functor. So
that $\tilde \C$ is a modular category and $F$ is a dominant
braided tensor functor. We have in addition

\begin{proposition}\label{equiv}  There is an action
$\rho: \underline{T} \to \underline{\Aut}_{\otimes}\tilde \C$ by
braided autoequivalences such that $\C \simeq \tilde \C^T$ as
braided tensor categories over $\rep T$.
\end{proposition}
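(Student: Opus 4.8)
The plan is to exhibit $\C$ as the $T$-equivariantization of $\tilde\C$ by means of the standard (de)equivariantization correspondence for braided fusion categories that contain a tannakian subcategory inside their M\"uger center.

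First I would record, using Lemma \ref{transp-ty}, that since $\C$ is not symmetric its M\"uger center is the tannakian subcategory $\C' = \C[T]$. Its simple objects are precisely the invertibles in $T$, with trivial twist $\theta_g = 1$ for all $g \in T$, so the chosen braided structure gives a braided equivalence $\C' \simeq \rep T$ of symmetric tensor categories (using the self-duality $T \simeq \hat T$ of the elementary abelian $2$-group $T$). In particular the resulting embedding $\rep T \hookrightarrow \C$ lands inside the M\"uger center $\C'$, which is exactly the transparency hypothesis needed below.

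Next I would invoke the equivalence between braided fusion categories equipped with a braided action of $T$ and braided fusion categories containing $\rep T$ in their M\"uger center, realized by equivariantization and de-equivariantization (see \cite{bruguieres}, \cite{DGNOI}, \cite{ENO2}). Concretely, let $A = \operatorname{Fun}(T)$ be the regular algebra of $\rep T$, a connected commutative algebra that is transparent in $\C$ because $\rep T \subseteq \C'$; the de-equivariantization $\C_T := \operatorname{Mod}_A(\C)$ is then a braided fusion category carrying a canonical action $\rho\colon \underline{T} \to \underline{\Aut}_{\otimes}(\C_T)$ by braided autoequivalences, and the equivariantization functor produces a braided equivalence $(\C_T)^T \simeq \C$ that intertwines the two canonical copies of $\rep T$, i.e.\ an equivalence over $\rep T$.

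Finally I would identify $\C_T$ with the modularization $\tilde\C$. The free-module functor $F_A\colon \C \to \C_T$ is a dominant braided tensor functor, and since we have de-equivariantized by the \emph{entire} tannakian M\"uger center $\C' \simeq \rep T$, the category $\C_T$ has trivial M\"uger center and is therefore modular \cite{bruguieres}, \cite{DGNOI}. By uniqueness of the modularization this yields a braided equivalence $\C_T \simeq \tilde\C$ under which $F_A$ corresponds to the modularization functor $F$; transporting $\rho$ along this equivalence gives the desired action on $\tilde\C$ together with $\C \simeq \tilde\C^T$ over $\rep T$. The main point requiring care is checking that de-equivariantizing by the full M\"uger center produces a genuinely modular (not merely premodular) category coinciding with $\tilde\C$; once transparency of $\rep T$ is secured from Lemma \ref{transp-ty}, the existence of the braided $T$-action and the ``over $\rep T$'' compatibility follow formally from naturality of the (de)equivariantization correspondence.
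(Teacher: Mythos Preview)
Your proposal is correct and follows essentially the same route as the paper: the paper simply records (via \cite{bruguieres}) the exact sequence of braided tensor functors $\rep T \to \C \overset{F}\to \tilde\C$ and then invokes \cite[Corollary 5.31]{tensor-exact} to conclude $\C \simeq \tilde\C^T$ over $\rep T$. Your explicit unpacking via the (de)equivariantization correspondence and the identification of the de-equivariantization with $\tilde\C$ by uniqueness of the modularization is precisely the content underlying that cited corollary, so the two arguments coincide in substance.
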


\begin{proof} It follows from the results in \cite{bruguieres} that there is an
exact sequence of braided tensor functors $\rep T \to \C \overset{F}\to \tilde \C$.
Then the proposition follows from \cite[Corollary 5.31]{tensor-exact}.
See \cite[Example 5.33]{tensor-exact}.  \end{proof}

\begin{lemma}\label{not-pointed} Suppose $\tilde \C$ is not pointed. Then
$G(\tilde \C) \simeq G/T \simeq \mathbb Z_2$.
\end{lemma}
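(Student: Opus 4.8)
The plan is to combine a Frobenius--Perron dimension count with the structural input already provided by Lemma \ref{transp-ty} and Proposition \ref{equiv}. First I would observe that, since $\tilde \C$ is assumed not pointed, $\C$ cannot be symmetric: a symmetric premodular category satisfies $\C' = \C$, so its modularization is $\vect_k$, which is pointed. Hence the hypotheses of Lemma \ref{transp-ty} are in force, and we may use $\C' = \C[T]$ with $T = \{g \in G \mid \chi(g,g) = 1\}$ together with the equivariantization description $\C \simeq \tilde \C^T$ of Proposition \ref{equiv}.

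Next I would pin down $G/T$. Since $\C$ is braided, $G$ is an elementary abelian $2$-group, so $\chi(g,h)^2 = \chi(g^2, h) = 1$ forces every value of $\chi$ to lie in $\{\pm 1\}$; a short computation then shows that $q(g) := \chi(g,g)$ is a group homomorphism $q\colon G \to \{\pm 1\}$ with $\ker q = T$, whence $[G:T] \in \{1,2\}$. If $T = G$, then $\chi(g,g) = 1$ for all $g$, and the Remark following Lemma \ref{transp-ty} forces $\C$ to be symmetric or $\tilde \C$ to be pointed --- both of which we have excluded. Therefore $[G:T] = 2$ and $G/T \simeq \mathbb Z_2$.

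I would then exhibit the isomorphism $G(\tilde \C) \simeq G/T$ directly. The modularization functor $F\colon \C \to \tilde \C$ is a tensor functor, so $g \mapsto F(g)$ is a group homomorphism $G \to G(\tilde \C)$; since $F$ collapses the transparent subcategory $\C' = \C[T]$ onto multiples of $\1$, its kernel is exactly the subgroup of invertible objects lying in $\C'$, namely $T$, giving an injection $G/T \hookrightarrow G(\tilde \C)$ and in particular $|G(\tilde \C)| \geq 2$. Finally, using the standard identity $\FPdim \tilde \C = \FPdim \C / \FPdim \C' = 2|G|/|T| = 2[G:T] = 4$, I would run the dimension count in the modular, non-pointed category $\tilde \C$: writing $4 = |G(\tilde \C)| + \sum_i d_i^2$ over the non-invertible simple objects (each $d_i > 1$), the only solution compatible with $|G(\tilde \C)| \geq 2$ and non-pointedness is $|G(\tilde \C)| = 2$ with a single non-invertible object of dimension $\sqrt 2$. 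Thus the injection $G/T \hookrightarrow G(\tilde \C)$ is an isomorphism and $G(\tilde \C) \simeq G/T \simeq \mathbb Z_2$.

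The step I expect to require the most care is the identification $\ker\bigl(G \to G(\tilde \C)\bigr) = T$, as opposed to a mere comparison of orders: this rests on the fact that modularization trivializes precisely the transparent objects, so $F(g) \simeq \1$ if and only if $g$ belongs to $\C'$, i.e. $g \in T$. Everything else reduces to the finite quadratic-character computation for $q$ and the bounded dimension count, both of which are routine.
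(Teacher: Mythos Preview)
Your argument is correct, and it takes a genuinely different route from the paper's. Both proofs share the computation that $q(g)=\chi(g,g)$ is a homomorphism $G\to\{\pm1\}$ with kernel $T$, giving $G/T\simeq\mathbb Z_2$, and both observe that $F$ induces an injection $G/T\hookrightarrow G(\tilde\C)$. The difference is in how the reverse inequality $|G(\tilde\C)|\le 2$ is obtained. The paper argues \emph{object by object}: for an arbitrary invertible $h\in G(\tilde\C)$ it uses the left adjoint $L$ of $F$ and the identity $FL(h)\simeq\bigoplus_{t\in T}\rho^t(h)\in\tilde\C_{pt}$ to rule out $m(h,F(X))>0$, and then surjectivity of $F$ forces $h\simeq F(g)$ for some $g\in G$, so $G(\tilde\C)=F(G)\simeq G/T$. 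You instead bypass the adjoint entirely with a \emph{global dimension count}: from the exact sequence $\C[T]\to\C\to\tilde\C$ you get $\FPdim\tilde\C=2|G|/|T|=4$, and then the equation $4=|G(\tilde\C)|+\sum_i d_i^2$ with $|G(\tilde\C)|\ge 2$ and at least one $d_i>1$ forces $|G(\tilde\C)|=2$.

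Your approach is more elementary---it needs nothing about the adjoint $L$ or the equivariant structure beyond the Frobenius--Perron identity for exact sequences---but it is also less informative: the paper's argument yields directly that \emph{every} invertible of $\tilde\C$ is in the image of $G$, which is a structural statement, whereas yours matches cardinalities and then invokes the already-established injection. Both are perfectly adequate here because the target group has order $2$; in a situation with a larger modular quotient the paper's method would still identify $G(\tilde\C)$ as an explicit quotient, while the dimension count alone might not. Your caution about the step $\ker(G\to G(\tilde\C))=T$ is well placed but the justification you give (that modularization trivializes exactly the transparent objects) is the right one and needs no further elaboration.
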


\begin{proof} By Proposition \ref{equiv}, $\C \simeq \tilde \C^T$
is a $T$-equivariantization. The modularization functor
corresponds to the forgetful functor $F: \tilde \C^T \to \tilde
\C$. Since, by assumption, $\tilde \C$ is not pointed, then $T
\neq G$.

Note that $\C' = \C[T]$ is the kernel of  $F$ in the sense of
\cite{tensor-exact}. Then $F(G) \simeq G/T$ is isomorphic to a
subgroup $G(\tilde \C)$.

Let $h \in G(\tilde \C)$ be an invertible object. We claim that
$m(h, F(X)) = 0$. This can be seen as follows. Let $L: \C \to
\C^G$ denote the left adjoint of $F$. Then we have $FL(h) =
\oplus_{t \in T}\rho^t(h)$ \cite{tensor-exact}, and in particular,
$FL(h)$ belongs to $\tilde \C_{pt}$. Suppose on the contrary that
$m(h, F(X)) > 0$. It follows by adjunction, that $X$ is a simple
direct summand of $L(h)$ and therefore $F(X)$ is a direct summand
of $FL(h)$. This implies that $F(X) \in \tilde \C_{pt}$ and then,
by surjectivity of $F$,  $\tilde \C_{pt} = \tilde \C$, since $X$
generates $\C$. This contradiction shows that $m(h, F(X)) = 0$, as
claimed.

By surjectivity of the functor $F$, there exists $g \in G$ such
that $m(h, F(g)) > 0$. Then $h \simeq F(g) \in G/T$. This shows
that $G(\tilde \C) \simeq G/T$.

Observe next that, since $G$ is an elementary abelian $2$-group,
then $\chi$ induces a group homomorphism $f: G \to \mathbb Z_2$,
defined in the form $f(g) = \chi(g, g)$, for all $g\in G$. We have
$T = \ker f$, whence $[G: T] = 2$, because $T \neq G$.
\end{proof}

\begin{theorem}\label{modulariz-ty} Let $\C$ be a braided Tambara-Yamagami fusion
category and let $\tilde \C$ be the modularization of $\C$. Then
we have:
\begin{enumerate}\item[(i)] $\C$ is integral if and only if $\tilde
\C$ is pointed.
\item[(ii)] Suppose that $\C$ is not integral. Then $\tilde \C \simeq \mathcal
I$, as braided tensor categories, where $\mathcal I$ is an Ising
category.
\end{enumerate}
\end{theorem}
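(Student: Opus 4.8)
The plan is to reduce the whole statement, in the non-symmetric case, to two implications: (A) if $\tilde \C$ is pointed then $\C$ is integral, and (B) if $\tilde \C$ is not pointed then $\tilde \C$ is an Ising category and $\C$ is not integral. Granting these, both assertions follow formally: (A) gives the `if' direction of (i); (B) gives the contrapositive of its `only if' direction; and (ii) follows because, by (A), a non-integral $\C$ cannot have pointed $\tilde\C$, so (B) applies. Throughout I would record that $G$ is an elementary abelian $2$-group, so $|G| = 2^m$ and, by Remark \ref{rk-index}, $\FPdim \C = 2|G|$, while the unique non-invertible simple object $X$ satisfies $\FPdim X = \sqrt{|G|} = 2^{m/2}$; hence $\C$ is integral precisely when $m$ is even. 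I would first dispose of the symmetric case: if $\C$ is symmetric then, being modularizable, it is tannakian, so $\C \simeq \rep K$ is integral and $\tilde\C \simeq \vect$ is pointed, and the theorem holds. Thus for the rest I assume $\C$ is not symmetric, so that Lemma \ref{transp-ty}, Proposition \ref{equiv} and Lemma \ref{not-pointed} apply with $\C' = \C[T]$.

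For (A), suppose $\tilde \C$ is pointed. By Proposition \ref{equiv} we have $\C \simeq \tilde\C^T$ as braided tensor categories, so $\C$ is a $T$-equivariantization of a pointed fusion category. The forgetful functor $\tilde\C^T \to \tilde\C$ is a tensor functor and hence preserves Frobenius--Perron dimensions; since every object of the pointed category $\tilde\C$ has integer dimension, every simple object of $\C$ has integer dimension as well. Therefore $\C$ is integral. (Equivalently, an equivariantization of a pointed category is group-theoretical, hence integral.)

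For (B), suppose $\tilde\C$ is not pointed. Then $\C$ is not symmetric, and Lemma \ref{not-pointed} gives $G(\tilde\C) \simeq G/T \simeq \mathbb Z_2$, so $[G:T] = 2$. Using $\C \simeq \tilde\C^T$ from Proposition \ref{equiv}, I get $\FPdim \C = |T|\,\FPdim\tilde\C$, whence $\FPdim \tilde\C = 2|G|/|T| = 2[G:T] = 4$. Thus $\tilde\C$ is a modular category of dimension $4$ with exactly two invertible simple objects; the remaining dimension forces a single non-invertible simple object $\sigma$ with $(\FPdim \sigma)^2 = 2$, and the fusion rules are then forced to be the Ising fusion rules. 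A braided fusion category with these fusion rules is an Ising category, which is moreover modular by \cite[Corollary B.12]{DGNOI}; hence $\tilde\C \simeq \mathcal I$. Finally, the vanishing $m(h, F(X)) = 0$ for all invertible $h$, established in the proof of Lemma \ref{not-pointed}, shows that $F(X)$ has no invertible constituent, so $F(X) \simeq n\sigma$ for some $n \geq 1$. Comparing dimensions gives $\FPdim X = n\sqrt{2} \notin \mathbb Z$, so $\C$ is not integral. This proves (B) and completes the argument.

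The main obstacle, and the only genuinely new content beyond assembling the preceding lemmas, is step (B): identifying the non-pointed modularization precisely as an Ising category, and in particular carrying out the Frobenius--Perron dimension bookkeeping that links the decomposition $F(X) \simeq n\sigma$ to the non-integrality of $\C$. Everything else---the equivariantization description $\C \simeq \tilde\C^T$, the computation $G(\tilde\C) \simeq G/T$, and the vanishing $m(h, F(X)) = 0$---is already supplied by Proposition \ref{equiv} and Lemma \ref{not-pointed}, so the theorem is largely a matter of combining these inputs with the dimension count $\FPdim X = 2^{m/2}$.
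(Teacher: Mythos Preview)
Your argument is correct, and it takes a genuinely different route from the paper's. The paper proves (i) via the exact sequence $\C[T]\to\C\to\tilde\C$ and the general fact (from \cite{tensor-exact}) that $\C$ is integral iff $\tilde\C$ is, and then shows ``$\tilde\C$ integral $\Leftrightarrow$ $\tilde\C$ pointed'' by invoking Theorem~\ref{modular-genty}: a non-pointed $\tilde\C$ contains an Ising subcategory, hence is not integral. For (ii) the paper again appeals to Theorem~\ref{modular-genty}, after first observing that the fusion rules of $\C$ force $\tilde\C$ to have generalized Tambara--Yamagami fusion rules; combined with $G(\tilde\C)\simeq\mathbb Z_2$ this pins down $\tilde\C$ as Ising. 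By contrast, you bypass Theorem~\ref{modular-genty} entirely: from $\C\simeq\tilde\C^{T}$ and $[G:T]=2$ you read off $\FPdim\tilde\C=4$, and the Ising fusion rules then fall out of an elementary dimension count. Your approach is more self-contained and avoids the classification of modular generalized Tambara--Yamagami categories; the paper's approach, on the other hand, exhibits the result as an instance of that broader structure theorem. One small remark: your handling of the symmetric case is actually more careful than the paper's, which tacitly uses $\C'=\C[T]$ (valid only when $\C$ is not symmetric) in writing the exact sequence.
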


Note in addition that the integrality of $\C$ is determined by the
parity of the rank of $G$, namely,  $\C$ is integral if and only
if the rank of $G$ is even.

\begin{proof} (i) We have an exact sequence of fusion categories
$\C[T] \to \C \to \tilde \C$. By \cite{tensor-exact} $\C$ is
integral if and only if $\tilde \C$ is integral. If $\tilde \C$ is
not pointed, then by Theorem  \ref{modular-genty}, it contains an
Ising subcategory and therefore it is not integral. Hence $\tilde
\C$ is integral if and only if it is pointed. This shows (i).

\medbreak (ii) Since $\C$ is not integral, then $\tilde \C$ is not
integral neither. In particular, $\tilde \C$ is not pointed. By
Lemma \ref{not-pointed} we have $G(\tilde \C) \simeq G/T \simeq
\mathbb Z_2$. The fusion rules of $\C$ imply that $\tilde \C$ has
generalized Tambara-Yamagami fusion rules.  Theorem \ref{modular-genty} implies that  $\tilde \C$ is equivalent to an Ising category.
This proves (ii) and finishes the proof of the theorem.
\end{proof}

\bibliographystyle{amsalpha}

\end{document}